\documentclass[12pt]{l4dc2021_MODDED} 

\makeatletter
\def\set@curr@file#1{\def\@curr@file{#1}} 
\makeatother

\usepackage[load-configurations=version-1]{siunitx} 


\title[Approximate midpoint policy iteration for linear quadratic control]{Approximate Midpoint Policy Iteration for Linear Quadratic Control}
\usepackage{times}
\usepackage{algorithm}
\usepackage[noend]{algorithmic}

\usepackage{mathtools}
\usepackage{enumitem}
\usepackage{wrapfig}

\newcommand{\EE}{\mathbb{E}}
\newcommand{\RR}{\mathbb{R}}
\newcommand{\bbS}{\mathbb{S}}

\newcommand{\cD}{\mathcal{D}}

\newcommand{\cH}{\mathcal{H}}

\newcommand{\cK}{\mathcal{K}}

\newcommand{\cN}{\mathcal{N}}

\newcommand{\cQ}{\mathcal{Q}}
\newcommand{\cR}{\mathcal{R}}

\newcommand{\cU}{\mathcal{U}}

\newcommand{\cX}{\mathcal{X}}

\newcommand{\tp}{\intercal}

\newcommand{\Let}{\coloneqq}


\newcommand{\eps}{\varepsilon}




\DeclareMathOperator{\vect}{vec}
\DeclareMathOperator{\mat}{mat}
\DeclareMathOperator{\svec}{svec}
\DeclareMathOperator{\smat}{smat}
\DeclareMathOperator{\Tr}{Tr}

\newcommand{\dlyap}{\texttt{DLYAP}}

\newcommand{\lstdq}{\texttt{LSTDQ}}

\newcommand{\rollout}{\texttt{ROLLOUT}}


\author{%
 \Name{Benjamin Gravell} \Email{benjamin.gravell@utdallas.edu}\\
 \addr University of Texas at Dallas
 \AND
 \Name{Iman Shames} \Email{iman.shames@unimelb.edu.au}\\
 \addr University of Melbourne%
 \AND
 \Name{Tyler Summers} \Email{tyler.summers@utdallas.edu}\\
 \addr University of Texas at Dallas%
}

\begin{document}

\maketitle

\begin{abstract}%
We present a midpoint policy iteration algorithm to solve linear quadratic optimal control problems in both model-based and model-free settings. The algorithm is a variation of Newton's method, and we show that in the model-based setting it achieves \emph{cubic} convergence, which is superior to standard policy iteration and policy gradient algorithms that achieve quadratic and linear convergence, respectively. We also demonstrate that the algorithm can be approximately implemented without knowledge of the dynamics model by using least-squares estimates of the state-action value function from trajectory data, from which policy improvements can be obtained. With sufficient trajectory data, the policy iterates converge cubically to approximately optimal policies, and this occurs with  the  same  available  sample  budget  as  the  approximate standard policy iteration. Numerical experiments demonstrate effectiveness of the proposed algorithms.
\end{abstract}

\begin{keywords}%
  Optimal control, linear quadratic regulator (LQR), optimization, Newton method, midpoint Newton method, data-driven, model-free.%
\end{keywords}

\section{Introduction}
With the recent confluence of reinforcement learning and data-driven optimal control, there is renewed interest in fully understanding convergence, sample complexity, and robustness in both ``model-based'' and ``model-free'' algorithms. Linear quadratic problems in continuous spaces provide benchmarks where strong theoretical statements can be made.
In practice, it is often difficult or impossible to develop a model of a system from first-principles. In this case, one may use so-called ``model-based'' system identification methods which attempt to estimate a model of the dynamics from observed sample data, then solve the Riccati equation using the identified system matrices. An approximately optimal control policy is then computed assuming certainty-equivalence \cite{Mania2019CertaintyEC, oymak2019non, coppens2020sample} or using robust control approaches to explicitly account for model uncertainty \cite{dean2018regret, Dean2019, gravell2020pmlr, coppens2020data}. The analyses in these recent works has focused on providing finite-sample performance/suboptimality guarantees.

As an alternative, so-called ``model-free'' methods may also be used, which do not attempt to learn a model of the dynamics. The category of policy optimization methods which directly attempt to optimize the control policy, including policy gradient, has received significant attention recently for standard LQR \cite{Fazel2018, bu2020lqr}, multiplicative-noise LQR \cite{gravell2021tac}, Markov jump LQR \cite{jansch2020jump}, and LQ games related to $\mathcal{H}_\infty$ robust control \cite{zhang2019policy, bu2019games}.

Between the fully model-based system identification approaches and the fully model-free policy optimization approaches lies another category of methods, which we denote as value function approximation methods. These methods attempt to estimate value functions then compute policies which are optimal with respect to these value functions. This class of methods includes approximate dynamic programming, exemplified by approximate value iteration, which estimates state-value functions, and approximate policy iteration, which estimates state-action value functions. In particular, for LQR problems, approximate policy iteration was studied by \cite{bradtke1994adaptive,  krauth2019finite} and by \cite{Fazel2018, bu2020lqr} under the guise of a quasi-Newton method.
For LQ games, approximate policy iteration was studied by \cite{altamimi2007model} under the guise of Q-learning, and by \cite{luo2020, gravell2020css}.
Note that approximate policy iteration is sometimes called quasi-Newton or Q-learning.

In stochastic optimal control, the functional Bellman equation gives a sufficient and necessary condition for optimality of a control policy (\cite{bellman1959dynamic}).
It has been long-known, but perhaps underappreciated, that application of Newton's method to find the root of the functional Bellman equation in stochastic optimal control is equivalent to the dynamic programming algorithm of policy iteration (\cite{puterman1979, madani2002policy}). 
In this most general setting, conditions for and rates of convergence are available (\cite{puterman1979, madani2002policy}), but may be difficult or impossible to verify in practice. Furthermore, even representing the value functions and policies and executing the policy iteration updates may be intractable. This motivates the basic approximation of such problems by linear dynamics and quadratic costs over finite-dimensional, infinite-cardinality state and action spaces. 
In linear-quadratic problems, the Bellman equation becomes a matrix algebraic Riccati equation, and application of the Newton method to the Riccati equation yields the well-known Kleinman-Hewer algorithm.\footnote{\cite{kleinman1968iterative} introduced this for continuous-time systems, and \cite{hewer1971iterative} studied it for discrete-time systems.}
The Newton method has many variations devised to improve the convergence rate and information efficiency, including higher-order methods 
(such as Halley (\cite{cuyt1985computational}), super-Halley (\cite{gutierrez2001super}), and Chebyshev (\cite{argyros1993results})),
and multi-point methods (\cite{traub1964iterative}), which compute derivatives at multiple points and of which the midpoint method is the simplest member.
Some of these have been applied to solving Riccati equations by \cite{anderson1978second, guo2000newton, damm2001newton, freiling2004, hernandez2018solving}, but without consideration of the situation when the dynamics are not perfectly known.
Our main contributions are:
\vspace{-0.5\baselineskip}
\begin{enumerate}[noitemsep]
    \item We present a midpoint policy iteration algorithm to solve linear quadratic optimal control problems when the dynamics are both known (Algorithm \ref{algo:exact_mpi}) and unknown (Algorithm \ref{algo:approximate_mpi}).
    \item We demonstrate that the method converges, and does so at a faster \emph{cubic} rate than standard policy iteration or policy gradient, which converge at quadratic and linear rates, respectively.
    \item We show that approximate midpoint policy iteration converges faster in the model-free setting even with the same available sample budget as the approximate standard policy iteration.
    \item We present numerical experiments that illustrate and demonstrate the effectiveness of the algorithms and provide an open-source implementation to facilitate their wider use.
\end{enumerate}

\section{Preliminaries}
\begin{center}
  \begin{tabular}{p{2cm} p{12.5cm}} 
  Symbol & Meaning  \\
  \hline
    {$\RR^{n \times m}$} & {Space of real-valued $n \times m$ matrices} \\
    {$\bbS^{n}$} & {Space of symmetric real-valued $n \times n$ matrices} \\
    {$\bbS^{n}_{+}$} & {Space of symmetric real-valued positive semidefinite $n \times n$ matrices} \\
    {$\bbS^{n}_{++}$} & {Space of symmetric real-valued strictly positive definite $n \times n$ matrices} \\
    {$\rho(M)$} & {Spectral radius (greatest magnitude of an eigenvalue) of a square matrix $M$} \\
    {$\lVert M \rVert$} & {Spectral norm (greatest singular value) of a matrix $M$} \\
    {$\lVert M \rVert_F$} & {Frobenius norm (Euclidean norm of the vector of singular values) of a matrix $M$} \\
    {$M \otimes N$} & {Kronecker product of matrices $M$ and $N$} \\
    {$\vect(M)$} & {Vectorization of matrix $M$ by stacking its columns} \\
    {$\mat(v)$} & {Matricization of vector $v$ such that $\mat(\vect(M))=M$} \\
    {$\svec(M)$} & {Symmetric vectorization of matrix $M$ by stacking columns of the upper triangular part, including the main diagonal, with off-diagonal entries multiplied by $\sqrt{2}$ such that $\lVert M \rVert_F^2 = \svec(M)^\tp \svec(M)$} \\
    {$\smat(v)$} & {Symmetric matricization of vector $v$ i.e. inverse operation of $\svec(\cdot)$ such that $\smat(\svec(M))=M$} \\
    {$M \succ (\succeq) \ 0$} & {Matrix $M$ is positive (semi)definite} \\
    {$M \succ (\succeq) \ N$} & {Matrix $M$ succeeds matrix $N$ as $M-N \succ (\succeq) \ 0$ } \\
  \end{tabular}
\end{center}

The infinite-horizon average-cost time-invariant linear quadratic regulator (LQR) problem is
\begin{alignat}{2}  \label{eq:LQR}
    &\underset{{\pi \in \Pi}}{\text{minimize}} \quad && \lim_{T \to \infty} \frac{1}{T} \mathbb{E}_{x_0, w_t} \sum_{t=0}^T \begin{bmatrix} x_t \\ u_t \end{bmatrix}^\tp \begin{bmatrix} Q_{xx} & Q_{xu} \\ Q_{ux}  & Q_{uu} \end{bmatrix} \begin{bmatrix} x_t \\ u_t \end{bmatrix}, \\
    &\text{subject to}                         \quad && x_{t+1} = A x_t +  B u_t + w_t, \nonumber
\end{alignat}
where $x_t \in \RR^n$ is the system state, $u_t \in \RR^m$ is the control input, and $w_t$ is i.i.d. process noise with zero mean and covariance matrix $W$. The state-to-state system matrix $A \in \RR^{n \times n}$ and input-to-state system matrix $B \in \RR^{n \times m}$ may or may not be known; we present algorithms for both settings.
The optimization is over the space $\Pi$ of (measurable) history dependent feedback policies $\pi = \{ \pi_t \}_{t=0}^{\infty}$ with $u_t = \pi_t(x_{0:t}, u_{0:t-1})$. 
The penalty weight matrix
\begin{align*}
    Q = \begin{bmatrix} Q_{xx} & Q_{xu} \\ Q_{ux}  & Q_{uu} \end{bmatrix} \in \bbS^{n+m}
\end{align*}
has blocks $Q_{xx}$, $Q_{uu}$, $Q_{xu}=Q_{ux}^\tp$ which quadratically penalize deviations of the state, input, and product of state and input from the origin, respectively.
We assume
the pair $(A, B)$ is stabilizable,
the pair $(A, Q_{xx}^{1/2})$ is detectable, and
the penalty matrices satisfy the definiteness condition $Q \succ 0$,
in order to ensure feasibility of the problem (see \cite{anderson2007optimal}).
An LQR problem is fully described by the tuple of problem data $(A, B, Q)$, which are fixed after problem definition.
In general, operators denoted by uppercase calligraphic letters depend on the problem data $(A, B, Q)$, but we will not explicitly notate this for brevity; dependence on other parameters will be denoted explicitly by functional arguments.
We index over time in the evolution of a dynamical system with the letter $t$, and index over iterations of an algorithm with the letter $k$.

Dynamic programming can be used to show that the optimal policy that solves \eqref{eq:LQR} is linear state-feedback
\begin{align*}
    u_t = K x_t ,
\end{align*}
where the gain matrix $K=\cK(P)$ is expressed through the linear-fractional operator $\cK$
\begin{align*}
    \cK(P) 
    &\Let
    -\left( Q_{uu} + B^\tp P B \right)^{-1}(Q_{ux} + B^\tp P A) ,
\end{align*}
and $P$ is the optimal value matrix found by solving the algebraic Riccati equation (ARE)
\begin{align}
    \cR(P) = 0, \label{eq:ricc}
\end{align}
where $\cR$ is the quadratic-fractional Riccati operator
\begin{align*}
    \cR(P) 
    &\Let
    -P + Q_{xx} + A^\tp P A - (Q_{xu} + A^\tp P B) \left( Q_{uu} + B^\tp P B \right)^{-1}(Q_{ux} + B^\tp P A) .
\end{align*}
The optimal gain and value matrix operators can be expressed more compactly as
\begin{align*}
    \cK(P) 
    &=
    -\cH_{uu}^{-1}(P) \cH_{ux}(P) , \\
    \cR(P) 
    &=
    -P + \cH_{xx}(P) - \cH_{xu}(P) \cH_{uu}^{-1}(P) \cH_{ux}(P)
\end{align*}
where $\cH$ is the state-action value matrix operator
\begin{align*}
    \cH(P)
    =
    \begin{bmatrix}
    \cH_{xx}(P) & \cH_{xu}(P) \\
    \cH_{ux}(P) & \cH_{uu}(P)
    \end{bmatrix}
    \Let
    Q
    +
    \begin{bmatrix}
    A & B
    \end{bmatrix}^\tp
    P
    \begin{bmatrix}
    A & B
    \end{bmatrix}.
\end{align*}
The discrete-time Lyapunov equation with matrix $F$ and symmetric matrix $S$ is
\begin{align*}
    X = F^\tp X F + S,
\end{align*}
whose solution we denote by $X = \dlyap(F, S)$, which is unique if $F$ is Schur stable.

\subsection{Derivatives of the Riccati operator}
The first total derivative
\footnote{In infinite dimensions, the first total derivative is called the \emph{Fr\'echet} derivative, and the first directional derivative is called the \emph{Gateaux} derivative. As we are only considering finite-dimensional systems, we do not need the full generality of these objects.} 
of the Riccati operator evaluated at point $P \in \bbS^{n}$ is denoted as $\cR^\prime(P) \in \bbS^{n} \times \bbS^{n}$. With a slight abuse of notation, the first directional derivative of the Riccati operator evaluated at point $P$ in direction $X$ is denoted as $\cR^\prime(P, X) \in \bbS^{n}$.
Computation of the first directional derivative is straightforward and follows e.g. the derivation given by \cite{luo2020}. The general limit definition of this derivative is \begin{align*}
    \cR^\prime(P, X) 
    \Let  
    \lim_{\eps \to 0} \frac{\cR(P + \eps X) - \cR(P)}{\eps}
    =
    \left. \frac{d\cR(P + \eps X)}{d\eps} \right|_{\eps=0}
\end{align*}
Notice that since $\cR: \bbS^{n} \to \bbS^{n}$ it follows that $\cR^\prime(\cdot, \cdot): \bbS^{n} \times \bbS^{n} \to \bbS^{n}$.
In evaluating the first directional derivative, it will be useful note that
\begin{align*}
    \cH(P + \eps X)
    =
    Q
    +
    \begin{bmatrix}
    A & B
    \end{bmatrix}^\tp
    (P + \eps X)
    \begin{bmatrix}
    A & B
    \end{bmatrix}.
\end{align*}
The first derivative is then
\begin{align*}
    \cR^\prime(P, X) 
    &=
    \left. \frac{d\cR(P + \eps X)}{d\eps} \right|_{\eps=0} \\
    &=
    \left. \frac{d}{d\eps} 
    \left[
    -(P + \eps X) + \cH_{xx}(P + \eps X) - \cH_{xu}(P + \eps X) \cH_{uu}^{-1}(P + \eps X) \cH_{ux}(P + \eps X)
    \right] \right|_{\eps=0}
\end{align*}
At this point it will be useful to evaluate the following expressions:
\begin{align*}
    \left. \frac{d \cH(P + \eps X)}{d\eps} \right|_{\eps=0}
    &=
    \left. \frac{d}{d\eps} \left[ 
    Q
    +
    \begin{bmatrix}
    A & B
    \end{bmatrix}^\tp
    (P + \eps X)
    \begin{bmatrix}
    A & B
    \end{bmatrix}
    \right] \right|_{\eps=0} \\
    &=
    \begin{bmatrix}
    A & B
    \end{bmatrix}^\tp
    X
    \begin{bmatrix}
    A & B
    \end{bmatrix},
\end{align*}
and
\begin{align*}
    \left. \frac{d \cH_{uu}^{-1}(P + \eps X)}{d\eps} \right|_{\eps=0}
    &=
    -\cH_{uu}^{-1}(P) \left. \frac{d \cH_{uu}(P + \eps X)}{d\eps} \right|_{\eps=0} \cH_{uu}^{-1}(P) \\
    &=
    -\left( Q_{uu} + B^\tp P B \right)^{-1} \left( B^\tp X B \right)  \left( Q_{uu} + B^\tp P B \right)^{-1} ,
\end{align*}
where we used the rule for a derivative of a matrix inverse e.g. as in \cite{selby1974crc}.
Continuing with the first derivative,
\begin{align}
    \cR^\prime(P, X) 
    &=
    \left. \frac{d}{d\eps} 
    \left[
    -(P + \eps X) + \cH_{xx}(P + \eps X) - \cH_{xu}(P + \eps X) \cH_{uu}^{-1}(P + \eps X) \cH_{ux}(P + \eps X)
    \right] \right|_{\eps=0} \nonumber \\
    &=
    - \left. \frac{d}{d\eps} \left[ P + \eps X \right] \right|_{\eps=0}
    +
    \left. \frac{d}{d\eps} \left[\cH_{xx}(P + \eps X) \right] \right|_{\eps=0} \nonumber \\
    &\quad -
    \left. \frac{d}{d\eps} \left[ \cH_{xu}(P + \eps X) \right] \right|_{\eps=0} \cH_{uu}^{-1}(P) \cH_{ux}(P) \nonumber \\
    &\quad -
    \cH_{xu}(P) \left. \frac{d}{d\eps} \left[ \cH_{uu}^{-1}(P + \eps X) \right] \right|_{\eps=0} \cH_{ux}(P) \nonumber \\
    &\quad -
    \cH_{xu}(P) \cH_{uu}^{-1}(P) \left. \frac{d}{d\eps} \left[ \cH_{ux}(P + \eps X) \right] \right|_{\eps=0} \nonumber \\
    &=
    -X + A^{\tp} X A \nonumber \\
    & \quad - A^{\tp} X B\left(Q_{uu} + B^{\tp} P B\right)^{-1} (Q_{ux} + B^{\tp} P A) \nonumber \\
    & \quad - (Q_{xu} + A^{\tp} P B) \left( Q_{uu} + B^{\tp} P B\right)^{-1} B^{\tp} X A \nonumber \\
    & \quad +A^{\tp} P B\left(Q_{uu}+B^{\tp} P B\right)^{-1} B^{\tp} X B\left(Q_{uu} + B^{\tp} P B\right)^{-1} B^{\tp} P A \label{eq:dR}
\end{align}
where we used the product rule for matrix derivatives.

\subsection{Identities}
Considering two symmetric matrices $P, X$ and the related gains
\begin{align*}
K &= \cK(P) = -\left( Q_{uu} + B^\tp P B \right)^{-1} (Q_{ux} + B^\tp P A), \\
L &= \cK(X) = -\left( Q_{uu} + B^\tp X B \right)^{-1} (Q_{ux} + B^\tp X A),    
\end{align*}
we have
\begin{align*}
    Q_{ux} + Q_{uu} K &= - B^\tp P (A+BK), \\
    K^\tp Q_{ux} + K^\tp Q_{uu} K &= - (BK)^\tp P (A+BK) .
\end{align*}
Thus the Riccati operator $\cR(P)$ can be rewritten as
\begin{align*}
    \cR(P) 
    &= - P + Q_{xx} + Q_{xu} K + A^\tp P (A + BK) \\
    &= - P + Q_{xx} + Q_{xu} K - (BK)^\tp P (A + BK) + (A + BK)^\tp P (A + BK) \\
    &= - P +
    \begin{bmatrix}
    I \\ K
    \end{bmatrix}^\tp
    \begin{bmatrix}
    Q_{xx} & Q_{xu} \\
    Q_{ux} & Q_{uu}
    \end{bmatrix}
    \begin{bmatrix}
    I \\ K
    \end{bmatrix}
    + (A + BK)^\tp P (A + BK) \\
    &= -P +
    \begin{bmatrix}
    I \\ K
    \end{bmatrix}^\tp
    \left(
    Q + \begin{bmatrix} A & B \end{bmatrix}^\tp P \begin{bmatrix} A & B \end{bmatrix}
    \right)
    \begin{bmatrix}
    I \\ K
    \end{bmatrix}
    ,
\end{align*}
and the derivative $\cR^\prime(X, P)$ can be written using \eqref{eq:dR} as
\begin{align*}
    \cR^\prime(X, P) &= -P + (A+BL)^\tp P (A+BL).
\end{align*}
and we have the identity 
\begin{align}
    \cR(P) - \cR^\prime(X, P)
    &=
    \begin{bmatrix}
    I \\ K
    \end{bmatrix}^\tp
    Q
    \begin{bmatrix}
    I \\ K
    \end{bmatrix} + (A + BK)^\tp P (A + BK) - (A+BL)^\tp P (A+BL) . \label{eq:identity1}
\end{align}
In the case of $X=P$, identity \eqref{eq:identity1} specializes to
\begin{align}
    \cR(P) - \cR^\prime(P, P) &= 
    \begin{bmatrix}
    I \\ K
    \end{bmatrix}^\tp
    Q
    \begin{bmatrix}
    I \\ K
    \end{bmatrix}. \label{eq:identity2}
\end{align}

\section{Generic Newton methods}
First we consider finding a solution to the equation $f(x)=0$ where $f: \RR^n \to \RR^n$, whose total derivative at a point $x$ is $f^\prime(x) \in \RR^{n \times n}$.
The methods under consideration can be understood and derived as the numerical integration of the following Newton-Leibniz integral from the second fundamental theorem of calculus:
\begin{align}
    0 = f(x) = f\left(x_{k}\right)+\int_{x_{k}}^{x} f^{\prime}(t) dt. \label{eq:newton_integral}
\end{align}

\subsection{Newton method}
The Newton method, due originally in heavily modified form to \cite{newton1711analysi, raphson1702analysis} and originally in the general differential form to \cite{simpson1740essays} (see the historical notes of \cite{kollerstrom1992thomas, deuflhard2012short}), begins with an initial guess $x_0$ then proceeds with iterations
\begin{align*}
    x_{k+1} &= x_k - f^\prime(x_k)^{-1} f(x_k),
\end{align*}
until convergence. 
Intuitively, the Newton method forms a linear approximation $f(x_k) + f^\prime(x_k)(x-x_k)$ to the function $f$ at $x_k$, and assigns the point where the linear approximation crosses $0$ as the next iterate. 
The Newton method can be derived from \eqref{eq:newton_integral} by using left rectangular integration.

The Newton update can be rearranged into the Newton equation
\begin{align*}
    f^\prime(x_k) (x_{k+1} - x_k) = - f(x_k),
\end{align*}
where the left-hand side is recognized as the \emph{directional derivative} of $f$ evaluated at point $x_k$ in direction $x_{k+1} - x_k$. This rearrangement implies that the Newton method \emph{does not require explicit evaluation of the entire total derivative $f^\prime(x_k)$} so long as a suitable direction $x_{k+1} - x_k$ can be found which solves the Newton equation. This will become important in the LQR setting as we use this fact to avoid notating and computing large order-4 tensors.

This technique uses derivative information at a single point and is known to achieve quadratic convergence in a neighborhood of the root (\cite{kantorovich1948newton}).
In the setting of both continuous- and discrete-time LQR, this algorithm is known to achieve quadratic convergence globally, as shown by \cite{kleinman1968iterative, hewer1971iterative, bu2020lqr}.

\subsection{Mid-point Newton method}
The midpoint Newton method, due originally to \cite{traub1964iterative}, begins with an initial guess $x_0$ then proceeds with iterations
\begin{align*}
    x^N_{k+1} &= x_k - f^\prime(x_k)^{-1} f(x_k), \\
    x^M_{k} &= \frac{1}{2} (x_k+ x^N_{k+1}), \\
    x_{k+1} &= x_k - f^\prime( x^M_{k} )^{-1} f(x_k),
\end{align*}
until convergence. The midpoint Newton method can be derived from \eqref{eq:newton_integral} by using midpoint rectangular integration.
Intuitively, much like the Newton method, the midpoint Newton method forms a linear approximation $f(x_k) + f^\prime(x^M_k)(x-x_k)$ to the function $f$ at $x_k$, and assigns the point where the linear approximation crosses $0$ as the next iterate. The distinction is that the slope of the linear approximation is not evaluated at $x_k$ as in the Newton method, but rather at the midpoint $x^M_{k} = \frac{1}{2} (x_k + x^N_{k+1})$ where $x^N_{k+1}$ is the Newton iterate.

The updates can be rearranged into the Newton equations
\begin{align*}
    f^\prime(x_k)     (x^N_{k+1} - x_k) &= - f(x_k), \\
    f^\prime(x^M_{k}) (x_{k+1} - x_k) &= - f(x_k),
\end{align*}
where the left-hand side of the first equation is recognized as the \emph{directional derivative} of $f$ evaluated at point $x_k$ in direction $x^N_{k+1} - x_k$; the second equation is of the same form. This rearrangement implies that the midpoint Newton method \emph{does not require explicit evaluation of the entire total derivative $f^\prime(x_k)$} so long as a suitable direction $x_{k+1} - x_k$ can be found which solves the Newton equation. This will become important in the LQR setting as we use this fact to avoid notating and computing large order-4 tensors.

Each iteration in this technique uses derivative information at two points, $x_k$ and $x^M_{k}$. This method has been shown to achieve cubic convergence in a neighborhood of the root by \cite{nedzhibov2002few, homeier2004, babajee2006}.


\section{Exact midpoint policy iteration}
We now consider application of the midpoint Newton method to the Riccati equation \eqref{eq:ricc}.
Although \eqref{eq:ricc} could be brought to the vector form $f(x)=0$ by vectorization with $x=\svec(P)$ and $f(x) = \svec(\cR(\smat(x)))$, it will be simpler to leave the equations in matrix form, which is possible due to the special form of the Newton-type updates, which only involve directional derivatives (and not total derivatives).
Applying the midpoint Newton update to \eqref{eq:ricc} yields
\begin{align*}
    P^N_{k+1} &= P_k - \cR^\prime(P_k)^{-1} (\cR(P_k)), \\
    P^M_k &= \frac{1}{2} \left( P^N_{k+1} + P_k \right), \\
    P_{k+1} &= P_k - \cR^\prime(P^M_k)^{-1} (\cR(P_k)),
\end{align*}
The updates can be rearranged into the Newton equations
\begin{align}
    \cR^\prime(P_k, P^N_{k+1}-P_k) &= -\cR(P_k) \label{eq:midpoint_newton_equation1} \\
    \cR^\prime(P^M_k, P_{k+1}-P_k) &= -\cR(P_k) \label{eq:midpoint_newton_equation2}
\end{align}
and further by linearity of $\cR^\prime(\cdot, X)$ in $X$ to
\begin{align*}
    \cR^\prime(P_k, P^N_{k+1}) &= \cR^\prime(P_k, P_k) - \cR(P_k), \\
    \cR^\prime(P^M_k, P_{k+1}) &= \cR^\prime(P^M_k, P_k) - \cR(P_k).
\end{align*}
Recalling the expression for $\cR^\prime$in \eqref{eq:dR} for the left-hand sides and applying the identities in \eqref{eq:identity1} and \eqref{eq:identity2} to the right-hand sides, these become the Lyapunov equations
\begin{align*}
    P^N_{k+1} &= (A+BK_k)^\tp P^N_{k+1} (A+BK_k) + Q + K_k^\tp R K_k, \\
      P_{k+1} &= (A+BL_k)^\tp P^N_{k+1} (A+BL_k) + Q + K_k^\tp R K_k + (A+BK_k)^\tp P_{k} (A+BK_k) \\
      & \hspace{7.8cm} - (A+BL_k)^\tp P_{k} (A+BL_k),
\end{align*}
or more compactly,
\begin{align*}
    P^N_{k+1} &= \dlyap(F^N, S^N), \\
      P_{k+1} &= \dlyap(F^M, S^M),
\end{align*}
where
\begin{align*}
\begin{array}{r@{\,} c@{\,} l@{\quad} r@{\,} c@{\,} l@{\,} }
    F^N &= & A+BK_k, & S^N &= & \begin{bmatrix} I \\ K_k \end{bmatrix}^\tp  Q  \begin{bmatrix} I \\ K_k \end{bmatrix} \\
    F^M &= & A+BL_k, & S^M &= & \begin{bmatrix} I \\ K_k \end{bmatrix}^\tp  Q  \begin{bmatrix} I \\ K_k \end{bmatrix} + (A+BK_k)^\tp P_{k} (A+BK_k) \\
    \ & \ & \ & \ & \ & \hspace{2.6cm} - (A+BL_k)^\tp P_{k} (A+BL_k)
\end{array}
\end{align*}
where
\begin{align*}
    K_k = \cK(P_k), \qquad
    L_k = \cK(M_k), \qquad
    M_k = \frac{1}{2} (P_k + P^N_{k+1}) .
\end{align*}
These updates are collected in the full midpoint policy iteration in Algorithm \ref{algo:exact_mpi}.

\begin{algorithm}
\caption{Exact midpoint policy iteration (MPI)}
\begin{algorithmic}[1]
\label{algo:exact_mpi}
    \REQUIRE System matrices $A,B$, penalty matrix $Q$, initial stabilizing gain $K_0$, tolerance $\eps$
    \STATE Initialize: $k = 0$, $P_{-1} = \infty I_n$, and $P_0 = \dlyap(F, S)$ \\ where $F =  A+BK_0$ and $S = \begin{bmatrix} I & K_0^\tp \end{bmatrix}  Q  \begin{bmatrix} I & K_0^\tp \end{bmatrix}^\tp.$
    \WHILE{$\|P_{k} - P_{k-1}\| > \eps$}
    \STATE Compute 
    $K_k = \cK(P_k).$
    \STATE Compute
    $F^N =  A+BK_k$,
    and $S^N = \begin{bmatrix} I & K_k^\tp \end{bmatrix}  Q  \begin{bmatrix} I & K_k^\tp \end{bmatrix}^\tp.$
	\STATE Solve 
    $P^N_{k+1} = \dlyap(F^N, S^N)$
    \STATE Compute 
    $M_k = \frac{1}{2} (P_k + P^N_{k+1})$ and
    $L_k = \cK(M_k).$
    \STATE Compute
    $F^M = A+BL_k$, and\\
    $S^M = \begin{bmatrix} I & K_k^\tp \end{bmatrix}  Q  \begin{bmatrix} I & K_k^\tp \end{bmatrix}^\tp + (A+BK_k)^\tp P_{k} (A+BK_k) - (A+BL_k)^\tp P_{k} (A+BL_k).$
    \STATE Solve
    $P_{k+1} = \dlyap(F^M, S^M).$
    \STATE $k \leftarrow k+1$
    \ENDWHILE
\ENSURE $P_k$, $K_k = \cK(P_k)$
\end{algorithmic}
\end{algorithm}

\begin{proposition} \label{prop:exact_convergence}
Consider Exact Midpoint Policy Iteration in Algorithm \ref{algo:exact_mpi}. For any feasible problem instance, there exists a neighborhood around the optimal gain $K^*$ from which any initial gain $K_0$ yields cubic convergence, i.e.
$
    \| K_{k+1} - K^* \| \leq \mathcal{O} \left(\| K_{k} - K^* \|^3 \right)
$
where $\| \cdot \|$ is any matrix norm.
\end{proposition}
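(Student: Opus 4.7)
The strategy is to recognize Algorithm~\ref{algo:exact_mpi} as the abstract midpoint Newton method of \cite{traub1964iterative} applied to the root--finding problem $\cR(P)=0$ on $\bbS^n$, invoke its known local cubic convergence after verifying the usual regularity hypotheses in the LQR setting, and finally transfer the resulting rate from the value iterates $P_k$ to the gain iterates $K_k=\cK(P_k)$.

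First I would establish two structural facts for $\cR$ near $P^*$. \emph{Smoothness:} since $Q_{uu}\succ 0$ and $P\succeq 0$, the matrix $Q_{uu}+B^\tp P B$ is uniformly invertible in a neighborhood of $P^*$, so both $\cR$ and $\cK$ are real--analytic there with bounded derivatives of all orders, which meets the smoothness assumptions of the cubic convergence proofs in \cite{nedzhibov2002few, homeier2004, babajee2006}. \emph{Invertibility of $\cR^\prime(P^*)$:} combining \eqref{eq:dR} with identities \eqref{eq:identity1}--\eqref{eq:identity2} at the base point $P^*$ gives the Lyapunov form $\cR^\prime(P^*, X) = -X + (A+BK^*)^\tp X (A+BK^*)$, where $K^*=\cK(P^*)$. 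Under the stated stabilizability, detectability, and definiteness hypotheses the optimal closed loop $A+BK^*$ is Schur stable, so the associated Stein operator is a bijection of $\bbS^n$; by continuity $\cR^\prime(M)$ remains invertible at every $M$ in a neighborhood of $P^*$, and in particular at the midpoints $M_k$ appearing in \eqref{eq:midpoint_newton_equation2}. The local convergence theorem for midpoint Newton then applies to $f=\cR$ and yields a constant $C>0$ and a neighborhood $\cN$ of $P^*$ on which $\|P_{k+1} - P^*\| \leq C \|P_k - P^*\|^3$.

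To transfer this to the gain iterates, I would combine two bounds. Local Lipschitzness of $\cK$ at $P^*$ gives $\|K_k - K^*\| \leq c_2 \|P_k - P^*\|$. For the reverse direction, the implicit function theorem applied to $\Phi(K,P) \coloneqq P - (A+BK)^\tp P (A+BK) - \begin{bmatrix} I \\ K \end{bmatrix}^\tp Q \begin{bmatrix} I \\ K \end{bmatrix} = 0$ at $(K^*,P^*)$ shows that $\partial_P \Phi(K^*,P^*)$ is exactly the invertible Stein operator from Step~2, so the closed--loop value map $K \mapsto P(K)$ is a local diffeomorphism, yielding $\|P - P^*\| \leq c_3 \|K - K^*\|$ on its range. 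Chaining these with the cubic bound on $P_k$ gives $\|K_{k+1} - K^*\| \leq \mathcal{O}(\|K_k - K^*\|^3)$ as claimed.

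The main obstacle is precisely this transfer step. Because the midpoint iterate $P_{k+1}$ is \emph{not} in general the value matrix of any single stabilizing policy, applying the reverse bound from $K$ to $P$ requires showing that the $P_k$ generated by Algorithm~\ref{algo:exact_mpi} nevertheless stay close to the graph of $K\mapsto P(K)$, and that the iterates remain in a common neighborhood where $\cK$, $\cR^\prime$, and the Stein operator are all uniformly well--conditioned. Once these structural bounds are in place the cubic rate on $K_k$ follows from the cubic rate on $P_k$ by substitution, completing the proof.
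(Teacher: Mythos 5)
Your plan follows the same skeleton as the paper's proof: recognize Algorithm~\ref{algo:exact_mpi} as midpoint Newton applied to $\cR(P)=0$, invoke Theorem~2 of \cite{homeier2004} for local cubic convergence of the $P_k$ iterates, verify its smoothness and invertibility hypotheses using the LQR structure, and then transfer the rate to $K_k = \cK(P_k)$. Where you differ is in \emph{how} you verify the invertibility hypothesis. You argue abstractly: $\cR^\prime(P^*,X) = -X + (A+BK^*)^\tp X (A+BK^*)$ is an invertible Stein operator because $A+BK^*$ is Schur stable, and by continuity $\cR^\prime(M)$ stays invertible for $M$ near $P^*$, covering both Newton equations \eqref{eq:midpoint_newton_equation1}--\eqref{eq:midpoint_newton_equation2}. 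The paper instead argues constructively: it uses the Wonham-type identity of \cite{hewer1971iterative} to show that the gains $K^N=\cK(P)$ and $K^M=\cK(P^M)$ are themselves stabilizing, so that the Newton equations are Lyapunov equations with Schur-stable coefficient matrices and positive-definite right-hand sides. This lets the paper relax Homeier's assumption of global invertibility of $\cR^\prime$ on a ball to just solvability of the two specific Newton equations, and it also gives an auxiliary operator $\mathcal{Y}(K)$ used to certify positive definiteness of the source term. Your continuity argument is cleaner and shorter; the paper's argument is more concrete and gives explicit Lyapunov structure that is useful elsewhere in the paper. Both are valid.

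The worry you raise about the transfer step is perceptive, and in fact the paper does not resolve it either. The paper concludes with ``$\|K_k - K^*\| = \|\cK(P_k)-\cK(P^*)\| = \mathcal{O}(\|P_k - P^*\|)$, we conclude the same cubic convergence result holds for $K_k$,'' which only supplies the forward Lipschitz bound. Combined with $\|P_{k+1}-P^*\| \leq C\|P_k - P^*\|^3$ this gives $\|K_{k+1}-K^*\| \leq \mathcal{O}(\|P_k - P^*\|^3)$, but to rewrite the right-hand side as $\mathcal{O}(\|K_k-K^*\|^3)$ one also needs a reverse bound $\|P_k - P^*\| \leq c\,\|K_k - K^*\|$. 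Your implicit-function-theorem route via $\Phi(K,P)=0$ would give such a bound on the graph of the closed-loop value map, but as you correctly observe, for $k\geq 1$ the midpoint iterate $P_{k+1}$ is not the value matrix of $K_{k+1}$, so that bound does not directly apply. To close this, one can instead observe that the relevant deviation is controlled through the Newton iterate: $P^N_{k+1} = \dlyap(A+BK_k, S^N)$ \emph{is} the value matrix of $K_k$, so $\|P^N_{k+1}-P^*\| \asymp \|K_k - K^*\|$ by the implicit function theorem you cite, and one must then relate $\|P_k - P^*\|$ to $\|P^N_{k+1}-P^*\|$ via the quadratic rate of the inner Newton step. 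Neither the paper nor your proposal carries this out, so you have in fact identified a genuine (if repairable) gap shared by both.
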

\begin{proof}
By the assumptions on $(A, B, Q)$, the closed-loop matrix under the optimal gain satisfies $\rho(A+BK^*) < 1$.
Since the spectral radius of the closed-loop matrix $A+BK$ is continuous with respect to (each entry of) the gain $K$ (see \cite{Tyrtyshnikov2012}), it follows that there exists a radius $\eps_0 > 0$ and ball $\mathcal{B}_0 = \{ K : \ \|K - K^*\| < \eps_0 \}$ around the optimal gain $K^*$ within which any gain $K$ is stabilizing.

For use later, define the following quantities and operators in terms of the system data $(A, B, Q)$ and gain $K$.
Let $P$ be the solution to
\begin{align*}
    P = {F}^\tp P F + S
\end{align*}
where $F = A + BK$ and $S = \begin{bmatrix} I & {K}^\tp \end{bmatrix}  Q  \begin{bmatrix} I & {K}^\tp \end{bmatrix}^\tp$.
Let $K^N = \cK(P)$.
Let $P^N$ be the solution to 
\begin{align*}
    P^N = {F^N}^\tp P F^N + S^N
\end{align*}
where $F^N = A+BK^N$ and $S^N = \begin{bmatrix} I & {K^N}^\tp \end{bmatrix}  Q  \begin{bmatrix} I & {K^N}^\tp \end{bmatrix}^\tp$. 
Let $P^M = \frac{1}{2}(P + P^N)$.
Let $K^M = \cK(P^M)$ and $F^M = A+BK^M$.
Define the operator
\begin{align*}
    \mathcal{Y}(K) = P^N - {F^M}^\tp P^N F^M + \cR(P).
\end{align*}

Now, at the optimal gain $K = K^* = \cR(P^*)$, we have that $K^N = K^M = K^*$ and $P^N = P^M = P^*$ where $P^*$ solves the Riccati equation $\cR(P^*) = 0$. Therefore,
\begin{align*}
    \mathcal{Y}(K^*)
    &= P^* - (A+BK^*)^\tp P^* (A+BK^*) + 0 \\
    &= \begin{bmatrix} I & {K^*}^\tp \end{bmatrix}  Q  \begin{bmatrix} I & {K^*}^\tp \end{bmatrix}^\tp \succ 0
\end{align*}
By inspection of all the preceding relevant quantities, $\mathcal{Y}(K)$ is continuous with respect to $K$, and therefore there exists a radius $\eps_1 > 0$ and ball $\mathcal{B}_1 = \{ K : \ \|K - K^*\| < \eps_1 \}$ around the optimal gain $K^*$ within which any gain $K$ satisfies $\mathcal{Y}(K) \succ 0$. Define $\eps = \min(\eps_0, \eps_1)$ and likewise $\mathcal{B} = \{ K : \ \|K - K^*\| < \eps \}$.

Consider an arbitrary matrix $P$ computed from an arbitrary gain $K$ within $\mathcal{B}$ as the solution to the Lyapunov equation
\begin{align}
    P = (A+BK)^\tp P (A+BK) + \begin{bmatrix} I & K^\tp \end{bmatrix}  Q  \begin{bmatrix} I & K^\tp \end{bmatrix}^\tp, \label{eq:generic_dlyap}
\end{align}
which is well defined since $K$ is stabilizing and $Q \succ 0$. Define the set 
\begin{align*}
    \mathcal{P} = \{P : P \text{ solves \eqref{eq:generic_dlyap} with } K \in \mathcal{B} \}.
\end{align*}
Theorem 2 of \cite{homeier2004} requires that the inverse $\cR(P)^{-1}$ exist everywhere in $\mathcal{B}$; however the proof of Theorem 2 of \cite{homeier2004} only uses this assumption in order to ensure that solutions to the Newton equations \eqref{eq:midpoint_newton_equation1} and \eqref{eq:midpoint_newton_equation2} exist and are unique. Therefore, it suffices to prove just that solutions to the Newton equations \eqref{eq:midpoint_newton_equation1} and \eqref{eq:midpoint_newton_equation2} exist and are unique for any $P$ in $\mathcal{P}$.
Write the first Newton equation \eqref{eq:midpoint_newton_equation1} as 
\begin{align*}
    \cR(P, P^N - P) = -\cR(P).
\end{align*}
Using the expression \eqref{eq:dR}, this can be rewritten as
\begin{align}
    P - P^N = (A+BK^N)^\tp (P - P^N)(A+BK^N) -\cR(P). \label{eq:newton_equation1_rewrite}
\end{align}
where $K^N = \cK(P)$. 
By e.g. \cite{bertsekas1995dynamic} the matrix $-\cR(P) \succ 0$ (this is related to convergence of value iteration). 
Also, because $K^N = \cK(P)$ we may apply the Wonham-like identity developed in \cite{hewer1971iterative}
\begin{align*}
    P = {F^N}^\tp P F^N + S
\end{align*}
where $F^N = A+BK^N$ and 
\begin{align*}
    S = \begin{bmatrix} I & {K^N}^\tp \end{bmatrix}  Q  \begin{bmatrix} I & {K^N}^\tp \end{bmatrix}^\tp + (K - K^N)^\tp (Q_{uu} + B^\tp P B) (K - K^N) \succ 0 ,
\end{align*}
which shows that the gain $K^N$ is stabilizing i.e. $A+BK^N$ is Schur stable. 
Therefore the solution to \eqref{eq:newton_equation1_rewrite} is unique and well defined (and positive definite).

Similarly, write the second Newton equation \eqref{eq:midpoint_newton_equation2} as 
\begin{align*}
    \cR(P^M, P^+ - P) = -\cR(P)
\end{align*}
where $P^M = \frac{1}{2}(P + P^N)$ and $P^N$ solves the first Newton equation \eqref{eq:newton_equation1_rewrite}, equivalently
\begin{align*}
    P^N &= {F^N}^\tp P^N F^N + S^N
\end{align*}
where $S^N = \begin{bmatrix} I & {K^N}^\tp \end{bmatrix}  Q  \begin{bmatrix} I & {K^N}^\tp \end{bmatrix}^\tp$.
Using the expression \eqref{eq:dR}, the second Newton equation \eqref{eq:midpoint_newton_equation2} can be further rewritten as
\begin{align}
    P - P^+ = (A+BK^M)^\tp (P - P^+)(A+BK^M) -\cR(P). \label{eq:newton_equation2_rewrite}
\end{align}
where $K^M = \cK(P^M)$. 
Again, by e.g. \cite{bertsekas1995dynamic} the matrix $-\cR(P) \succ 0$.
By construction $P^M = \frac{1}{2}(P + P^N)$, so combining the expressions
\begin{align*}
    P &= {F^N}^\tp P F^N + S, \\
    P^N &= {F^N}^\tp P^N F^N + S^N 
\end{align*}
we obtain
\begin{align*}
    P^M 
    &= \frac{1}{2}(P + P^N) \\
    &= \frac{1}{2}\left( {F^N}^\tp P F^N + S + {F^N}^\tp P^N F^N + S^N \right) \\
    &= {F^N}^\tp \left( \frac{1}{2} (P+P^N) \right) F^N + \frac{1}{2}(S + S^N) \\
    &= {F^N}^\tp P^M F^N + \frac{1}{2}(S + S^N)
\end{align*}
and using $K^M = \cK(P^M)$ and the Wonham-like identity of \cite{hewer1971iterative} again we obtain
\begin{align*}
    P^M = {F^M}^\tp P^M F^M + S^M
\end{align*}
where $F^M = A + B K^M$ and
\begin{align*}
    S^M = \frac{1}{2}(S + S^N) + (K^N - K^M)^\tp (Q_{uu} + B^\tp P^M B) (K^N - K^M) \succ 0,
\end{align*}
which shows that the gain $K^M$ is stabilizing i.e. $A + BK^M$ is Schur stable.
Therefore the solution to \eqref{eq:newton_equation2_rewrite} is unique and well defined (and positive definite).
Since $P$ in $\mathcal{P}$ was arbitrary, we have proved the assertion that solutions to the Newton equations \eqref{eq:midpoint_newton_equation1} and \eqref{eq:midpoint_newton_equation2} exist and are unique for any $P$ in $\mathcal{P}$. Furthermore, $P^+$ equivalently solves the Lyapunov equation
\begin{align*}
    P^+ = {F^M}^\tp P^+ F^M + S^+
\end{align*}
where
\begin{align*}
    S^+ = P^N - {F^M}^\tp P^N F^M + \cR(P) \succ 0
\end{align*}
where the positive definiteness of $S^+$ follows by the restriction of $K$ to $\mathcal{B}_1$, and therefore $P^+$ proves stability of $K^M$.

Theorem 2 of \cite{homeier2004} also requires that $\cR$ be sufficiently smooth with bounded derivatives up to third order in $\mathcal{B}$. Any $P \in \mathcal{P}$ is positive definite and bounded above since $P$ solves \eqref{eq:generic_dlyap} and $K \in \mathcal{K}$ is stable. Also, by assumption we have $Q_{uu} \succ 0$. Therefore, the term $Q_{uu} + B^\tp P B \succeq Q_{uu} \succ 0$ so its inverse is well defined and bounded above. By examination of \eqref{eq:ricc} and \eqref{eq:dR} it is evident that $\cR$ and $\cR^\prime$ are analytic functions with upper bounds by the preceding arguments. Higher-order derivatives of $\cR$ follow similar Lyapunov equations as $\cR^\prime$ and are thus also upper bounded on $\mathcal{P}$. 

The assumptions of Theorem 2 of \cite{homeier2004} are satisfied, and thus we conclude that the iterates converge $\|P_{k+1} - P^*\| < c \|P_{k} - P^*\|$ for some $c \in [0, 1)$ and do so at a cubic rate $\| P_{k+1} - P^* \| \leq \mathcal{O} \left(\| P_{k} - P^* \|^3 \right)$.
Consider $K_{k} = K$ so $P_{k+1} = P^+$, and $L_k = K^M$.
Since $K_{k+1} = \cK(P_{k+1})$ and $P_{k+1}$ proves stability of a gain matrix $K^M$, this implies Schur stability of every $A+BK_k$ (using the Wonham-like identity of \cite{hewer1971iterative}). Likewise, since the sequence of $P_k$ approach the limit $P^*$ and $K_k = \cK(P_k)$, the sequence of $K_k$ approach the limit $K^*$.
Since $\|K_k - K^*\| = \| \cK(P_k) - \cK(P^*) \| = \mathcal{O}(\|P_{k} - P^*\|)$, we conclude the same cubic convergence result holds for $K_k$.
\end{proof}

\section{Approximate midpoint policy iteration}
In the model-free setting we do not have access to the dynamics matrices $(A,B)$, so we cannot execute the updates in Algorithm \ref{algo:exact_mpi}.
However, the gain $K=\cK(P)$ can be computed solely from the state-action value matrix $H=\cH(P)$ as $K=-H_{uu}^{-1}H_{ux}$. Thus, if we can obtain accurate estimates of $H$, we can use the estimate of $H$ to compute $K$ and we need not perform any other updates that depend explicitly on $(A,B)$. We begin by summarizing an existing method in the literature for estimating state-action value functions from observed state-and-input trajectories.

\subsection{State-action value estimation}
First, we connect the matrix $H$ with the (relative) state-action value ($\cQ$) function, which determines the (relative) cost of starting in state $x = x_0$, taking action $u = u_0$, then following the policy $u_t=K x_t$ thereafter:
\begin{align*}
    \Tr(PW) +
    \cQ_{K}(x,u)  
    &= 
    \begin{bmatrix}
    x \\
    u
    \end{bmatrix}^\tp
    \begin{bmatrix}
    Q_{xx} & Q_{xu}\\
    Q_{ux} & Q_{uu}
    \end{bmatrix}
    \begin{bmatrix}
    x \\
    u
    \end{bmatrix}
    + \underset{w}{\EE} \Big[ (Ax+Bu+w)^\tp P (Ax+Bu+w) \Big] \nonumber \\
    &=
    \begin{bmatrix}
    x \\
    u
    \end{bmatrix}^\tp
    \begin{bmatrix}
    H_{xx} & H_{xu}\\
    H_{ux} & H_{uu}
    \end{bmatrix}
    \begin{bmatrix}
    x \\
    u
    \end{bmatrix} 
    + \Tr(PW)
\end{align*}
or simply
\begin{align*}
    \cQ_{K}(x,u)  
    &= 
    \begin{bmatrix}
    x \\
    u
    \end{bmatrix}^\tp
    \begin{bmatrix}
    H_{xx} & H_{xu}\\
    H_{ux} & H_{uu}
    \end{bmatrix}
    \begin{bmatrix}
    x \\
    u
    \end{bmatrix} 
\end{align*}
where $H=\cH(P)$ and $P$ is the solution to
\begin{align}
    \dlyap\left(A+BK, \begin{bmatrix} I \\ K \end{bmatrix}^\tp \begin{bmatrix} Q_{xx} & Q_{xu} \\ Q_{ux} & Q_{uu} \end{bmatrix} \begin{bmatrix} I \\ K \end{bmatrix}\right) \label{eq:generic_save_P}
\end{align}
From this expression it is clear that a state-input trajectory, or ``rollout,'' $\cD = \{ x_t, u_t \}_{t=0}^{\ell}$ must satisfy this cost relationship, which can be used to estimate $H$. In particular, least-squares temporal difference learning for $\cQ$-functions (LSTDQ) was originally introduced by \cite{lagoudakis2003} and analyzed by \cite{abbasi2019model, krauth2019finite}, and is known to be a consistent and unbiased estimator of $H$. 
Following the development of \cite{krauth2019finite}, the LSTDQ estimator is summarized in Algorithm \ref{algo:lstdq}.
\begin{algorithm}
\caption{\lstdq: Least-squares temporal difference learning for $Q$-functions}
\begin{algorithmic}[1]
\label{algo:lstdq}
    \REQUIRE Rollout $\cD = \{ x_t, u_t \}_{t=0}^{\ell}$, gain matrix $K^{\text{eval}}$, penalty matrix $Q$, noise covariance $W$.
    \STATE Compute augmented rollout $\{z_t, v_t, c_t \}_{t=0}^{\ell}$ where
    $
        z_t = \begin{bmatrix} x \\ u  \end{bmatrix}, \
        v_t = \begin{bmatrix} x \\ K^{\text{eval}} x \end{bmatrix}, \
        c_t = z_t^\tp Q z_t.
    $
    \STATE Use feature map $\phi(z) = \svec \left(z z^\tp \right)$ and noise quantity 
    $\psi = \svec \left( \begin{bmatrix} I \\ K^{\text{eval}} \end{bmatrix} W \begin{bmatrix} I \\ K^{\text{eval}} \end{bmatrix}^\tp \right)$ 
    and compute the parameter estimate \\
    {\begin{center} $ \hat{\Theta} = \Big( \sum_{t=1}^\ell \phi(z_t) (\phi(z_t) - \phi(v_{t+1}) + \psi)^\tp \Big)^\dag \sum_{t=1}^\ell \phi(z_t) c_t. $ \end{center}}
\ENSURE $\hat{H} = \smat(\hat{\Theta})$.
\end{algorithmic}
\end{algorithm}

We collect rollouts to feed into Algorithm \ref{algo:lstdq} via Algorithm \ref{algo:rollout}, i.e. by initializing the state with $x_0$ drawn from the given initial state distribution $\mathcal{X}_0$, then generating control inputs according to $u_t = K^{\text{play}} x_t + u^{\text{explore}}_t$ where $K^{\text{play}}$ is a stabilizing gain matrix, and $u^{\text{explore}}_t$ is an exploration noise drawn from a distribution $\mathcal{U}_t$, assumed Gaussian in this work, to ensure persistence of excitation.
\begin{algorithm}
\caption{$\rollout$: Rollout collection}
\begin{algorithmic}[1]
\label{algo:rollout}
    \REQUIRE Gain $K^{\text{play}}$, rollout length $\ell$, initial state distribution $\cX_0$, exploration distributions $\{ \cU_t \}_{t=0}^\ell$.
    \STATE Initialize state $x_0 \sim \mathcal{X}_0$
    \FOR{$t=0,1,2,\ldots,\ell$}
    \STATE Sample exploratory control input $u^{\text{explore}}_t \sim \mathcal{U}_t$ and disturbance $w_t \sim W$
    \STATE Generate control input $u_t = K^{\text{play}} x_t + u^{\text{explore}}_t$
    \STATE Record state $x_t$ and input $u_t$
    \STATE Update state according to $x_{t+1} = A x_t +  B u_t + w_t$
    \ENDFOR
\ENSURE $\cD = \{ x_t, u_t \}_{t=0}^{\ell}$.
\end{algorithmic}
\end{algorithm}

Note that LSTDQ is an off-policy method, and thus the gain $K^{\text{play}}$ used to generate the data in Algorithm \ref{algo:rollout} and the gain $K^{\text{eval}}$ whose state-action value matrix is estimated in Algorithm \ref{algo:lstdq} need not be identical. We will use this fact in the next section to give an off-policy, offline (OFF) and on-policy, online (ON) version of our algorithm.
Likewise, the penalty matrix $Q$ used in Algorithm \ref{algo:lstdq} need not be the same as the one in the original problem statement, which is critical to developing the model-free midpoint update in the next section.

\subsection{Derivation of approximate midpoint policy iteration}
We have shown that estimates of the state-action value matrix $H$ can be obtained by $\lstdq$ using either off-policy or on-policy data. In the following development, (OFF) denotes a variant where a single off-policy rollout $\cD$ is collected offline before running the system, and (ON) denotes a variant where new on-policy rollouts are collected at each iteration. Also, an overhat symbol `` $\hat{}$ '' denotes an estimated quantity while the absence of one denotes an exact quantity.

In approximate policy iteration, we can simply form the estimate $\hat{H}_k$ using $\lstdq$ (see \cite{krauth2019finite}).
For approximate midpoint policy iteration, the form of $\hat{H}_k$ is more complicated and requires multiple steps.
To derive approximate midpoint policy iteration, we will re-order some of the steps in the loop of Algorithm \ref{algo:exact_mpi}. Specifically, move the gain calculation in step 3 to the end after step 9. 
We will also replace explicit computation of the value function matrices with estimation of state-action value matrices, i.e. subsume the pairs of steps 4, 5 and 8,9 into single steps, and work with $H$ instead of $P$.
Thus, at the beginning of each iteration we have in hand an estimated state-action value matrix $\hat{H}_k$ and gain matrix $\hat{K}_k$ satisfying $\hat{K}_k = -\hat{H}_{uu,k}^{-1} \hat{H}_{ux,k}$.

First we translate steps 4, 5, 6, and 7 to a model-free version.
Working backwards starting with step 7, in order to estimate $L_k$, it suffices to estimate $\cH(M_k)$ since $L_k = -\cH(M_k)_{uu}^{-1} \cH(M_k)_{ux}$.
In order to find $\cH(M_k)$, notice that the operator $\cH(X)$ is linear in $X$, so
\begin{align*}
    \cH(M_k) = \cH\left( \frac{1}{2} (P_k + P^N_{k+1}) \right) = \frac{1}{2} \left( \cH(P_k) + \cH(P^N_{k+1}) \right) .
\end{align*}
Therefore we can estimate $\cH(M_k)$ by estimating $\cH(P_k)$ and $\cH(P^N_{k+1})$ separately and taking their midpoint. Since the estimate $\hat{H}_k$ of $\cH(P_k)$ is known from the prior iteration, what remains is to find an estimate $\hat{H}^N_{k+1}$ of $\cH(P^N_{k+1})$ by
\begin{align*}
    \text{collecting } \cD^N &= \rollout(\hat{K}_k, \ell, \cX_0, \{ \cU_t \}_{t=0}^\ell) \tag{ON}, \\
    & \text{ or } \\
    \text{using } \cD^N &= \cD \tag{OFF},
\end{align*}
and estimating $\hat{H}^N_{k+1} = \lstdq(\cD^N, \hat{K}_k, Q)$. \\
Then we form the estimated gain $\hat{L}_k = -{\hat{H}^M_{uu,k}}{}^{-1} \hat{H}^M_{ux,k}$ where $\hat{H}^M_k = \frac{1}{2} (\hat{H}_k + \hat{H}^N_{k+1})$.

Now we translate steps 8, 9, and 2 to a model-free version. Working backwards, starting with step 2, in order to estimate $K_{k+1}$, it suffices to find an estimate $\hat{H}_{k+1}$ of matrix $\cH(P_{k+1})$ since $K_{k+1} = -\cH(P_{k+1})_{uu}^{-1} \cH(P_{k+1})_{ux}$.
From steps 8 and 9, we want to estimate
\begin{align}
    H_{k+1} &= \cH(P_{k+1})
    =
    Q
    + 
    \begin{bmatrix}
    A & B
    \end{bmatrix}^\tp
    P_{k+1}
    \begin{bmatrix}
    A & B
    \end{bmatrix}, \label{eq:ampi_H} \\
    \text{where } \quad P_{k+1}
    &=
    \dlyap\left(
    F^M,
    S^M
    \right), \label{eq:ampi_P} \\
    F^M &= A+BL_k, \nonumber \\
    S^M
    &=
    \begin{bmatrix}
    I \\ K_k
    \end{bmatrix}^\tp
    \left(
    Q +
    \begin{bmatrix}
    A & B
    \end{bmatrix}^\tp
    P_k
    \begin{bmatrix}
    A & B
    \end{bmatrix}
    \right)
    \begin{bmatrix}
    I \\ K_k
    \end{bmatrix}
    -
    \begin{bmatrix}
    I \\ L_k
    \end{bmatrix}^\tp
    \begin{bmatrix}
    A & B
    \end{bmatrix}^\tp
    P_k
    \begin{bmatrix}
    A & B
    \end{bmatrix}
    \begin{bmatrix}
    I \\ L_k
    \end{bmatrix}. \nonumber
\end{align}
Comparing the two arguments to $\dlyap(\cdot, \cdot)$ in \eqref{eq:generic_save_P} and \eqref{eq:ampi_P}, we desire both
\begin{align}
    A+BK &= A+BL_k, \label{eq:compatability1} \\
    \begin{bmatrix}
    I \\ K
    \end{bmatrix}^\tp
    Q^M
    \begin{bmatrix}
    I \\ K
    \end{bmatrix}
    &=
    S^M. \label{eq:compatability2}
\end{align}
Clearly it suffices to take $K=L_k$ in \eqref{eq:compatability1}.
Notice that, critically, all quantities in $S^M$ on the right-hand side of \eqref{eq:compatability1} have been estimated already, i.e. $\hat{K}_k$, $\hat{L}_k$, $\hat{H}_k$ have been calculated already and
\begin{align*}
    Q +
    \begin{bmatrix}
    A & B
    \end{bmatrix}^\tp
    P_k
    \begin{bmatrix}
    A & B
    \end{bmatrix}
    &=
    H_{k}, \\
    \begin{bmatrix}
    A & B
    \end{bmatrix}^\tp
    P_k
    \begin{bmatrix}
    A & B
    \end{bmatrix}
    &=
    H_{k} - Q.
\end{align*}
Substituting $K=L_k$ in \eqref{eq:compatability1} and comparing coefficients, it suffices to estimate $Q^M$ by
\begin{align}
    \hat{Q}^M
    =
    \begin{bmatrix}
        \begin{bmatrix}
        I & \hat{K}_k^\tp
        \end{bmatrix}
        \hat{H}_k
        \begin{bmatrix}
        I & \hat{K}_k^\tp
        \end{bmatrix}^\tp
        & 0 \\
        0 & 0
    \end{bmatrix}
    -
    (\hat{H}_k - Q) . \label{eq:Qm}
\end{align}
At this point, establish the rollout $\cD^M$ either by 
\begin{align*}
    \text{collecting }  \cD^M &= \rollout(\hat{L}_k, \ell, \cX_0, \{ \cU_t \}_{t=0}^\ell), \tag{ON} \\
    & \text{or} \\
    \text{using }  \cD^M &= \cD. \tag{OFF}
\end{align*}
Then the matrix $\hat{H}^O_{k+1} = \lstdq(\cD^M, \hat{L}_k, \hat{Q}^M)$ estimates
\begin{align*}
    H^O_{k+1} 
    =
    Q^M 
    + 
    \begin{bmatrix}
    A & B
    \end{bmatrix}^\tp
    P_{k+1}
    \begin{bmatrix}
    A & B
    \end{bmatrix} .
\end{align*}
However, we need
\begin{align*}
    H_{k+1}
    =
    Q
    + 
    \begin{bmatrix}
    A & B
    \end{bmatrix}^\tp
    P_{k+1}
    \begin{bmatrix}
    A & B
    \end{bmatrix} ,
\end{align*}
which is easily found by offsetting $H^O_{k+1}$ as
\begin{align*}
    H_{k+1} = H^O_{k+1} + (Q - Q^M),
\end{align*}
and thus
\begin{align*}
    \hat{H}_{k+1} = \hat{H}^O_{k+1} + (Q - \hat{Q}^M)
\end{align*}
estimates $H_{k+1}$.
One further consideration to address is the initial estimate $\hat{H}_0$; since we do not have a prior iterate to use, we simply collect $\cD = \rollout(\hat{K}_0, \ell, \cX_0, \{ \cU_t \}_{t=0}^\ell)$ and estimate $\hat{H}_0 = \lstdq(\cD, \hat{K}_0, Q)$ i.e. the first iteration will be a standard approximate policy iteration/Newton step. Importantly, the initial gain $\hat{K}_0$ must stabilize the system so that the value functions are finite-valued.
Also, although a convergence criterion such as $\|\hat{H}_{k} - \hat{H}_{k-1}\| > \eps$ could be used, it is more straightforward to use a fixed number of iterations $N$ so that the influence of stochastic errors in $\hat{H}_k$ does not lead to premature termination of the program. 
Likewise, a schedule of increasing rollout lengths $\ell$ could be used for the (ON) variant to achieve increasing accuracy, but finding a meaningful schedule which properly matches the fast convergence rate of the algorithm requires more extensive analysis.
The full set of updates are compiled in Algorithm \ref{algo:approximate_mpi}.

\begin{algorithm}
\caption{Approximate midpoint policy iteration (AMPI)}
\begin{algorithmic}[1]
\label{algo:approximate_mpi}
    \REQUIRE Penalty $Q$, gain $\hat{K}_0$, number of iterations $N$, rollout length $\ell$, distributions $\cX_0$, $\{ \cU_t \}_{t=0}^\ell$.
    \STATE Initialize: $\hat{H}_{-1} = \infty I_{n+m}$ and $k = 0$ 
    \STATE Collect $\cD = \rollout(\hat{K}_0, \ell, \cX_0, \{ \cU_t \}_{t=0}^\ell)$
    \STATE Estimate value matrix $\hat{H}_0 = \lstdq(\cD, K_0, Q)$.
    \WHILE{$k < N$}
    \STATE Set $\cD^N =\cD$ (OFF), or collect $\cD^N = \rollout(\hat{K}_k, \ell, \cX_0, \{ \cU_t \}_{t=0}^\ell)$ (ON)
    \STATE Estimate value matrix $\hat{H}^N_{k+1} = \lstdq(\cD^N, \hat{K}_k, Q)$.
    \STATE Form the midpoint value estimate
    $\hat{H}^M_k = \frac{1}{2} (\hat{H}_k + \hat{H}^N_{k+1}).$
    \STATE Compute the midpoint gain
    $\hat{L}_k = -{\hat{H}^M_{uu,k}}{}^{-1} \hat{H}^M_{ux,k}.$
    \STATE Set $\cD^M =\cD$ (OFF), or collect $\cD^M = \rollout(\hat{L}_k, \ell, \cX_0, \{ \cU_t \}_{t=0}^\ell)$ (ON)
    \STATE Estimate $\hat{H}^O_{k+1} = \lstdq(\cD^M, \hat{L}_k, \hat{Q}^M)$ where
    $
        Q^M 
        =
        \begin{bmatrix}
            \begin{bmatrix}
            I \\ \hat{K}_k
            \end{bmatrix}^\tp
            \hat{H}_k
            \begin{bmatrix}
            I \\ \hat{K}_k
            \end{bmatrix}
            & 0 \\
            0 & 0
        \end{bmatrix}
        -
        (\hat{H}_k -Q).
    $
    \STATE Compute the estimated value matrix 
    $\hat{H}_{k+1} = \hat{H}^O_{k+1} + ( Q - \hat{Q}^M )$.
    \STATE Compute the gain
    $\hat{K}_{k+1} = -\hat{H}_{uu,k+1}^{-1} \hat{H}_{ux,k+1}.$
    \STATE $k \leftarrow k+1$
    \ENDWHILE
\ENSURE $\hat{H}_k$, $\hat{K}_k$
\end{algorithmic}
\end{algorithm}

\begin{proposition} \label{prop:approx_convergence}
Consider Approximate Midpoint Policy Iteration in Algorithm \ref{algo:approximate_mpi}. As the rollout length $\ell$ grows to infinity, the state-action value matrix estimate $\hat H_k$ converges to the exact value. Thus, in the infinite data limit, for any feasible problem instance, there exists a neighborhood around the optimal gain $K^*$ from which any initial gain $\hat K_0$ converges cubically to $K^*$.
\end{proposition}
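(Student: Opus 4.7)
The plan is to reduce this to Proposition~\ref{prop:exact_convergence} through a consistency argument. First I would invoke the consistency of LSTDQ established in \cite{krauth2019finite, abbasi2019model}: provided the play gain used in Algorithm~\ref{algo:rollout} stabilizes the dynamics and the exploration noise satisfies persistence of excitation (guaranteed here by the Gaussian $\cU_t$), the LSTDQ estimator converges in probability to the exact state-action value matrix $\cH(P)$ corresponding to $K^{\text{eval}}$ as $\ell \to \infty$. This means every call to $\lstdq$ inside Algorithm~\ref{algo:approximate_mpi} is asymptotically exact.

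Next I would prove by induction on $k$ that every quantity produced by AMPI converges to its exact-MPI counterpart as $\ell\to\infty$. The base case is immediate: $\hat{H}_0 \to \cH(P_0)$ by LSTDQ consistency, and $\hat{K}_0 \to K_0$ by continuity of the map $H \mapsto -H_{uu}^{-1}H_{ux}$ (well-defined since $Q_{uu}+B^\tp P B \succ 0$ on $\mathcal{P}$, as used in the proof of Proposition~\ref{prop:exact_convergence}). For the inductive step, suppose $\hat{H}_k \to H_k$ and $\hat{K}_k \to K_k$. Then $\hat{H}^N_{k+1} \to \cH(P^N_{k+1})$ by another application of LSTDQ consistency; averaging and inverting gives $\hat{H}^M_k \to H^M_k$ and $\hat{L}_k \to L_k$. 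The constructed penalty $\hat{Q}^M$ defined in \eqref{eq:Qm} is a continuous function of $\hat{H}_k$ and $Q$, hence $\hat{Q}^M \to Q^M$. Applying LSTDQ with these converged inputs yields $\hat{H}^O_{k+1} \to H^O_{k+1}$, the offset step gives $\hat{H}_{k+1}\to H_{k+1}$, and continuity yields $\hat{K}_{k+1}\to K_{k+1}$. Thus in the infinite data limit the AMPI iterates coincide precisely with the exact MPI iterates. The claimed cubic convergence then follows immediately by invoking Proposition~\ref{prop:exact_convergence}.

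The main obstacle is ensuring that the gains $\hat{K}_k$ and $\hat{L}_k$ used to generate new rollouts in the (ON) variant remain stabilizing so that LSTDQ can even be applied at iteration $k$. For the (OFF) variant this is immediate, since only the initial play gain $\hat{K}_0$ is ever used for data collection, and it is assumed stabilizing. For the (ON) variant one must combine the inductive convergence $\hat{K}_k \to K_k$ with the neighborhood $\mathcal{B}$ of stabilizing gains constructed in the proof of Proposition~\ref{prop:exact_convergence}: once $\hat{K}_0$ lies in $\mathcal{B}$ and $\ell$ is taken sufficiently large at each step, every $\hat{K}_k$ and $\hat{L}_k$ remains inside $\mathcal{B}$ and therefore stabilizing. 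Promoting this asymptotic argument into quantitative finite-sample bounds (matching the sample budget claim made in the introduction) would require concentration inequalities for LSTDQ and a perturbation analysis of the cubic contraction, which is a substantially heavier task and not needed for the present statement.
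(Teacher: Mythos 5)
Your proposal follows the same route as the paper: invoke LSTDQ consistency to conclude that every $\hat{H}$ estimate converges to its exact counterpart as $\ell \to \infty$, then reduce to Proposition~\ref{prop:exact_convergence}. The paper's own proof is a two-line appeal to exactly these facts; your explicit induction over iterations and the discussion of stabilizability of $\hat{K}_k$, $\hat{L}_k$ in the (ON) variant fill in details the paper leaves implicit, but the underlying argument is identical.
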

\begin{proof}
The claim follows by Proposition \ref{prop:exact_convergence} and the fact that $\lstdq$ is a consistent estimator \cite{lagoudakis2003, krauth2019finite}, i.e. as $\ell \to \infty$ the estimates $\hat{H}$ used in Algorithm \ref{algo:approximate_mpi} approach the true values $H$ indirectly used in Algorithm \ref{algo:exact_mpi}.
\end{proof}

\section{Numerical experiments}
In this section we compare the empirical performance of proposed midpoint policy iteration (MPI) with standard policy iteration (PI), as well as their approximate versions (AMPI) and (API).
In all experiments, regardless of whether the exact or approximate algorithm is used, we evaluated the value matrix $P_k$ associated to the policy gains $K_k$ at each iteration $k$ on the true system, i.e. the solution to 
$P_k = \dlyap\left(A+BK_k, \begin{bmatrix} I & K_k^\tp \end{bmatrix} Q \begin{bmatrix} I & K_k^\tp \end{bmatrix}^\tp\right)$. 
We then normalized the deviation $\|P_k - P^*\|$, where $P^*$ solves the Riccati equation \eqref{eq:ricc}, by the quantity $\| P^* \|$. This gives a meaningful metric to compare different suboptimal gains.
We also elected to focus on the off-policy version (OFF) of AMPI and API in order to achieve a more direct and fair comparison between the midpoint and standard methods; each is given access to precisely the same sample data and initial policy, so differences in convergence are entirely due to the algorithms. 
Nevertheless, similar results were observed in the on-policy online setting (ON), albeit with more variation between Monte Carlo runs due to differing sample data.
Python code which implements the proposed algorithms and reproduces the experimental results is available at \url{https://github.com/TSummersLab/midpoint-policy-iteration}.

\subsection{Representative example}
Here we consider one of the simplest tasks in the control discipline: regulating an inertial mass using a force input.
The stochastic continuous-time dynamics of the second-order system are
\begin{align*}
    dx = A_c x \ dt  + B_c u \ dt + dw
\end{align*}
where
\begin{align*}
    A_c = 
    \begin{bmatrix}
    0 & 1 \\
    0 & 0
    \end{bmatrix}, \quad
    B_c = 
    \begin{bmatrix}
    0 \\
    \frac{1}{\mu}
    \end{bmatrix},
\end{align*}
with mass $\mu > 0$, state $x \in \RR^2$ where the first state is the position and the second state is the velocity, force input $u \in \RR$, and $dw \in \RR^2$ is a Wiener process with covariance $W_c \succeq 0$. 
Forward-Euler discretization of the continuous-time dynamics with sampling time $\Delta t$ yields the discrete-time dynamics
\begin{align*}
    x_{t+1} = A x_t + B u_t + w_t
\end{align*}
where
\begin{align*}
    A = \begin{bmatrix}
    1 & \Delta t \\
    0 & 1
    \end{bmatrix}, \quad
    B = 
    \begin{bmatrix}
    0 \\
    \frac{\Delta t}{\mu} 
    \end{bmatrix},
\end{align*}
with $w_t \sim \mathcal{N}(0, W)$ with $W = \Delta t \cdot W_c $. 
We used $\mu = 1$, $\Delta t = 0.01$, $W_c = 0.01 I_2$, $Q = I_3$. The initial gain was chosen by perturbing the optimal gain $K^*$ in a random direction such that the initial relative error $\|P_k - P^*\|/\| P^* \| = 10$; in particular the initial gain was $K_0 = \begin{bmatrix} -0.035 & -2.087 \end{bmatrix}$.
For the approximate algorithms, we used the hyperparameters $\ell = 300$, $\mathcal{X}_0 = \mathcal{N}(0, I_2)$, $\mathcal{U}_t = \mathcal{N}(0, I_2)$ for $t=0,1,\ldots, \ell$.

The results of applying midpoint policy iteration and the standard policy iteration are plotted in Figure \ref{fig:inertial_mass_convergence}. Clearly MPI and AMPI converge more quickly to the (approximate) optimal policy than PI and API, with MPI converging to machine precision in 7 iterations vs 9 iterations for PI, and AMPI converging to noise precision in 6 iterations vs 8 iterations for API.

\begin{figure}[htb]
\floatconts
  {fig:inertial_mass_convergence}
  {\caption{Relative value error $\|P_k - P^*\|/\| P^* \|$ vs iteration count $k$ using PI and MPI on the inertial mass control problem.}}
  {\includegraphics[width=0.7\linewidth]{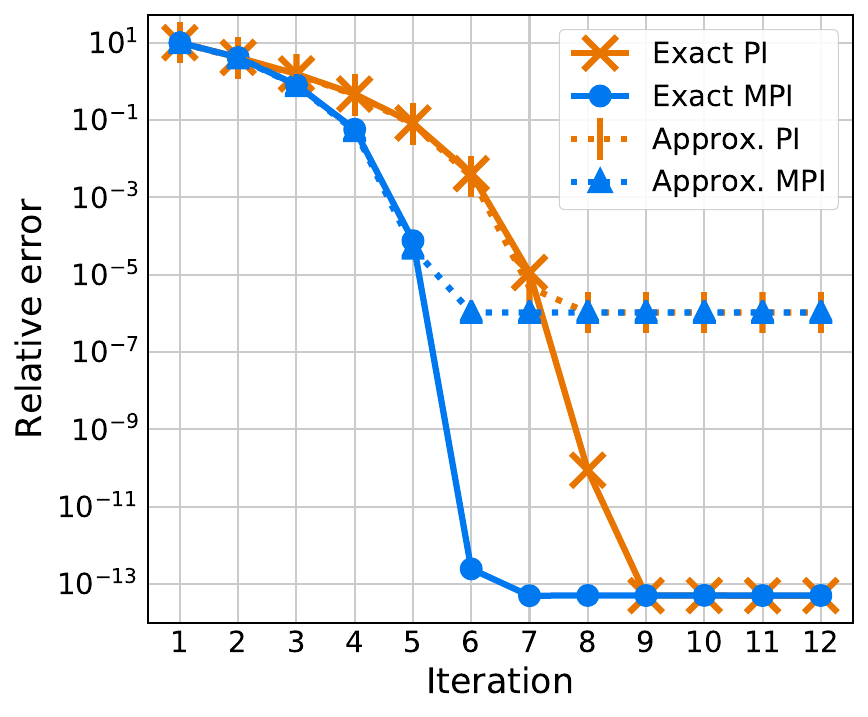}}
\end{figure}

\subsection{Randomized examples}
Next we apply the exact and approximate PI algorithms on $10000$ unique problem instances in a Monte Carlo-style approach, where problem data was generated randomly with $n=4$, $m=2$, entries of $A$ drawn from $\cN(0,1)$ and $A$ scaled so $\rho(A) \sim \text{Unif}([0, 2])$, entries of $B$ drawn from $\text{Unif}([0, 1])$, and $Q = U \Lambda U^\tp \succ 0$ with $\Lambda$ diagonal with entries drawn from $\text{Unif}([0, 1])$ and $U$ orthogonal by taking the QR-factorization of a square matrix with entries drawn from $\cN(0,1)$, where we denote the uniform distribution on the interval $[a,b]$ by $\text{Unif}([a, b])$ and the multivariate Gaussian distribution with mean $\mu$ and variance $\Sigma$ by $\mathcal{N}(\mu, \Sigma)$.
We used a small process noise covariance of $W = 10^{-6} I_4$ to avoid unstable iterates due to excessive data-based approximation error of $H$, over all problem instances.
All initial gains $K_0$ were chosen by perturbing the optimal gain $K^*$ in a random direction such that the initial relative error $\|P_k - P^*\|/\| P^* \| = 10$.
For the approximate algorithms, we used the hyperparameters $\ell = 100$, $\mathcal{X}_0 = \mathcal{N}(0, I_2)$, $\mathcal{U}_t = \mathcal{N}(0, I_2)$ for $t=0,1,\ldots, \ell$.

In Figures \ref{fig:plot_error_monte_carlo_true_scatter}, \ref{fig:plot_error_monte_carlo_false_scatter}, \ref{fig:plot_error_monte_carlo_false_scatter_on} we plot the relative value error $\|P_k - P^*\|/\| P^* \|$ over iterations, and each scatter point represents a unique Monte Carlo sample, i.e. a unique problem instance, initial gain, and rollout. 
Each plot shows the empirical distribution of errors at the iteration count $k$ labeled in the subplot titles above each plot. The x-axis is the spectral radius of $A$ which characterizes open-loop stability.
\begin{itemize}
    \item Figure \ref{fig:plot_error_monte_carlo_true_scatter} shows the results of the exact algorithms i.e. Algorithm \ref{algo:exact_mpi}.
    \item Figure \ref{fig:plot_error_monte_carlo_false_scatter} shows the results of the offline approximate algorithms i.e. Algorithm \ref{algo:approximate_mpi} (OFF).
    \item Figure \ref{fig:plot_error_monte_carlo_false_scatter_on} shows the results of the online approximate algorithms i.e. Algorithm \ref{algo:approximate_mpi} (ON).
\end{itemize}
In sub-Figures \ref{fig:plot_error_monte_carlo_true_scatter} (b), \ref{fig:plot_error_monte_carlo_false_scatter} (b), \ref{fig:plot_error_monte_carlo_false_scatter_on} (b), scatter points lying below 1.0 on the y-axis indicate that the midpoint method achieves lower error than the standard method on the same problem instance.

From Figure \ref{fig:plot_error_monte_carlo_true_scatter} (a), it is clear that MPI achieves extremely fast convergence to the optimal gain, with the relative error being less than $10^{-13}$, essentially machine precision, on almost all problem instances after just 5 iterations. From Figure \ref{fig:plot_error_monte_carlo_true_scatter} (b), we see that MPI achieves significantly lower error than PI on iteration counts $2,3,4,5$ for almost all problem instances. The relative differences in error on iteration counts $6,7$ are due to machine precision error and are negligible for the purposes of comparison i.e. after 6 iterations both algorithms have effectively converged to the same solution.

We observe very similar results using the approximate algorithms.
From Figure \ref{fig:plot_error_monte_carlo_false_scatter} (a), it is clear that AMPI achieves extremely fast convergence to a good approximation of the optimal gain, with the relative error being less than $10^{-6}$ on almost all problem instances after just 4 iterations. From Figure \ref{fig:plot_error_monte_carlo_false_scatter} (b), we see that AMPI achieves significantly lower error than API on iteration counts $2,3,4,5$ for almost all problem instances; recall that Algorithm \ref{algo:approximate_mpi} takes a standard PI step on the first iteration, explaining the identical performance on $k=1$.
Similar trends are observed in Figure \ref{fig:plot_error_monte_carlo_false_scatter_on} with the online variant (ON), but the variation is much greater. Nevertheless, AMPI provides a clear advantage on iteration counts $2,3,4,5$, beating API in terms of relative error most of the time.

\begin{figure}[!htbp]
\floatconts
  {fig:plot_error_monte_carlo_true_scatter}
  {\caption{(a) Relative value error $\|P_k - P^*\|/\| P^* \|$ using MPI and (b) ratio of relative error using MPI divided by that using PI. }}
  {%
    \subfigure[Relative error using MPI]{\includegraphics[width=0.99\linewidth]{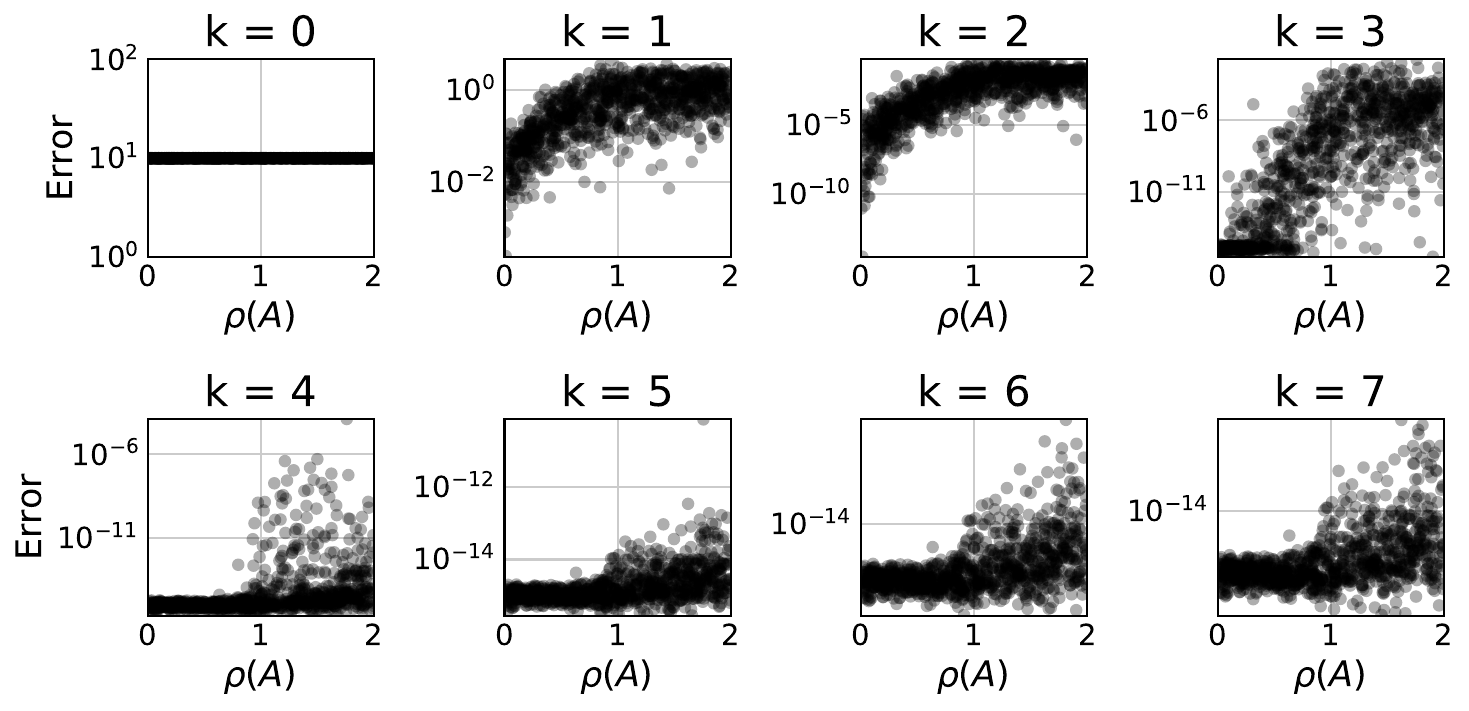}}\\ \ \\%
    \subfigure[Ratio of relative errors using MPI/PI]{\includegraphics[width=0.99\linewidth]{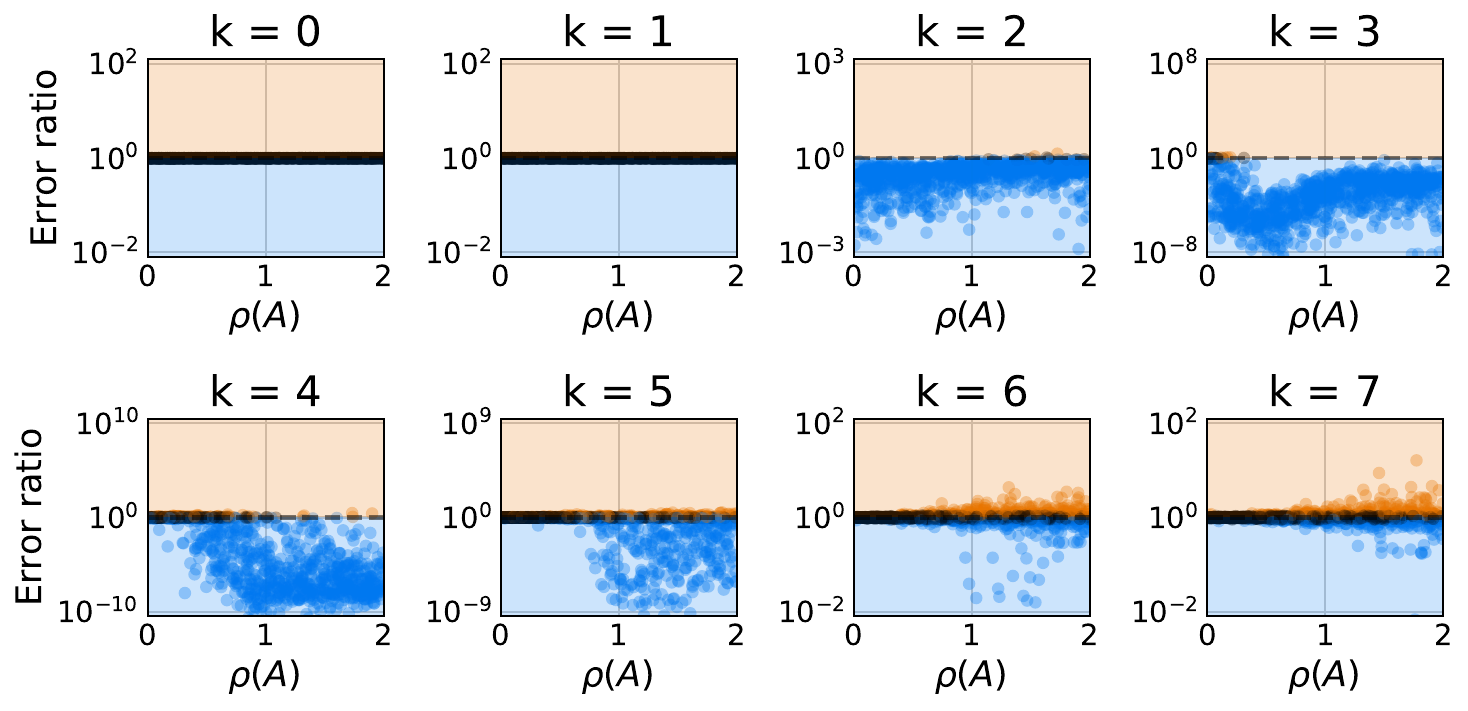}}
  }
\end{figure}

\begin{figure}[!htbp]
\floatconts
  {fig:plot_error_monte_carlo_false_scatter}
  {\caption{(a) Relative value error $\|P_k - P^*\|/\| P^* \|$ using AMPI and (b) ratio of relative error using AMPI divided by that using API, all with the offline algorithm variant (OFF).}}
  {%
    \subfigure[Relative error using AMPI (OFF)]{\includegraphics[width=0.99\linewidth]{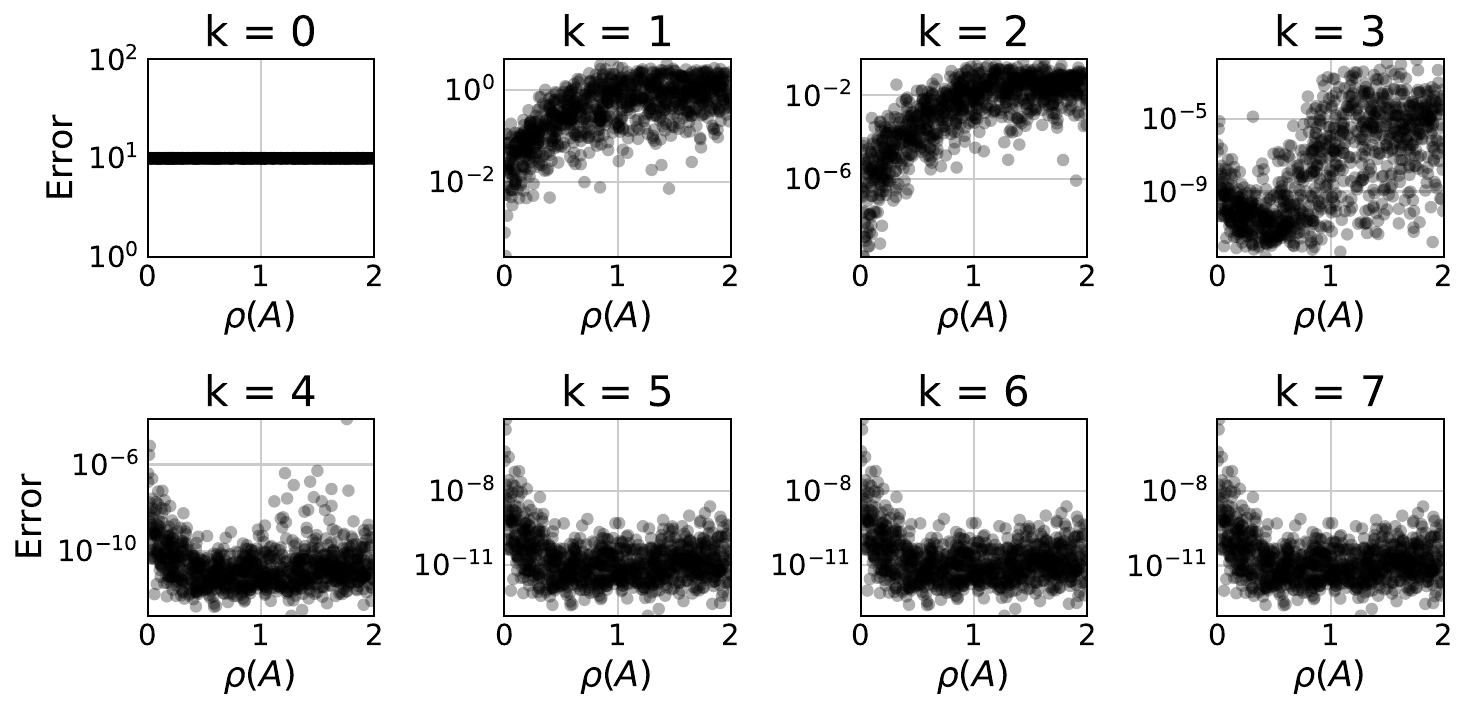}}\\ \ \\%
    \subfigure[Ratio of relative errors using AMPI/API (OFF)]{\includegraphics[width=0.99\linewidth]{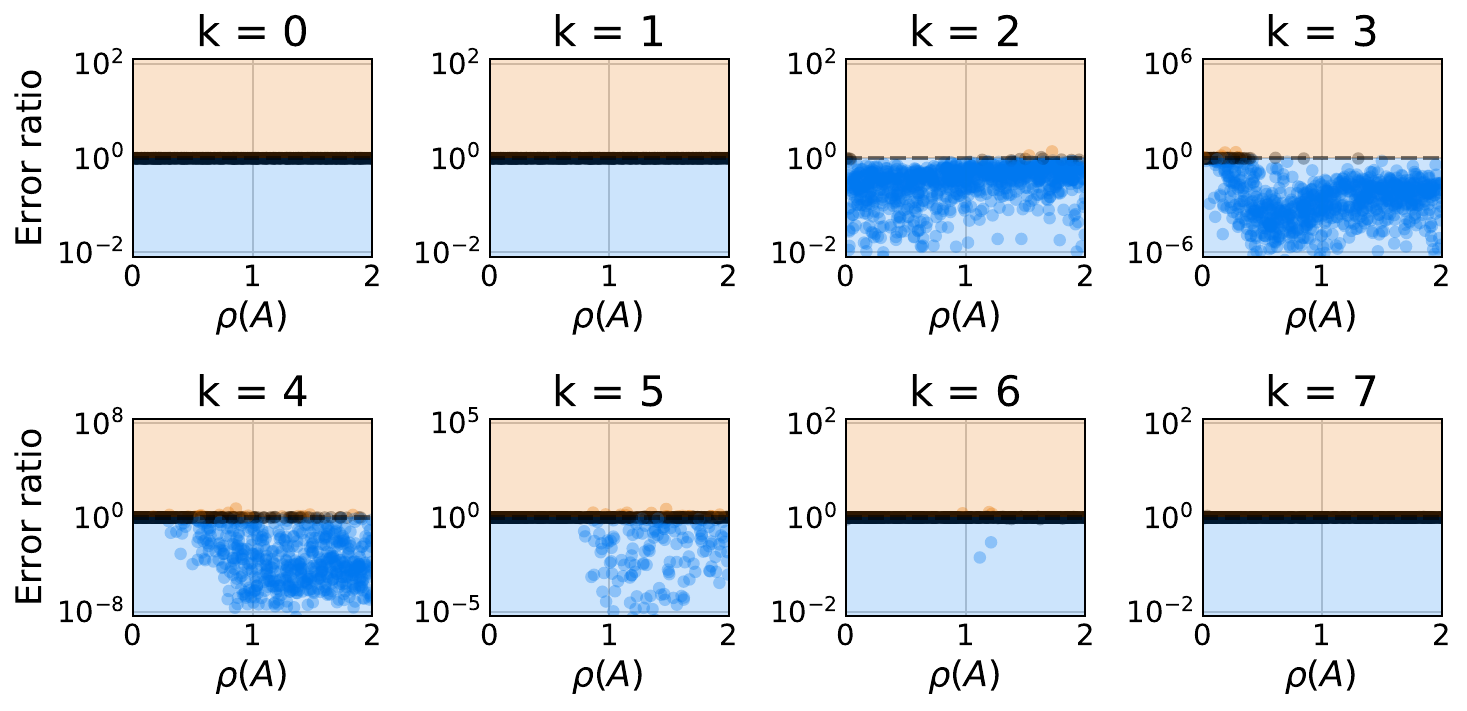}}
  }
\end{figure}

\begin{figure}[!htbp]
\floatconts
  {fig:plot_error_monte_carlo_false_scatter_on}
  {\caption{(a) Relative value error $\|P_k - P^*\|/\| P^* \|$ using AMPI and (b) ratio of relative error using AMPI divided by that using API, all with the offline algorithm variant (ON).}}
  {%
    \subfigure[Relative error using AMPI (ON)]{\includegraphics[width=0.99\linewidth]{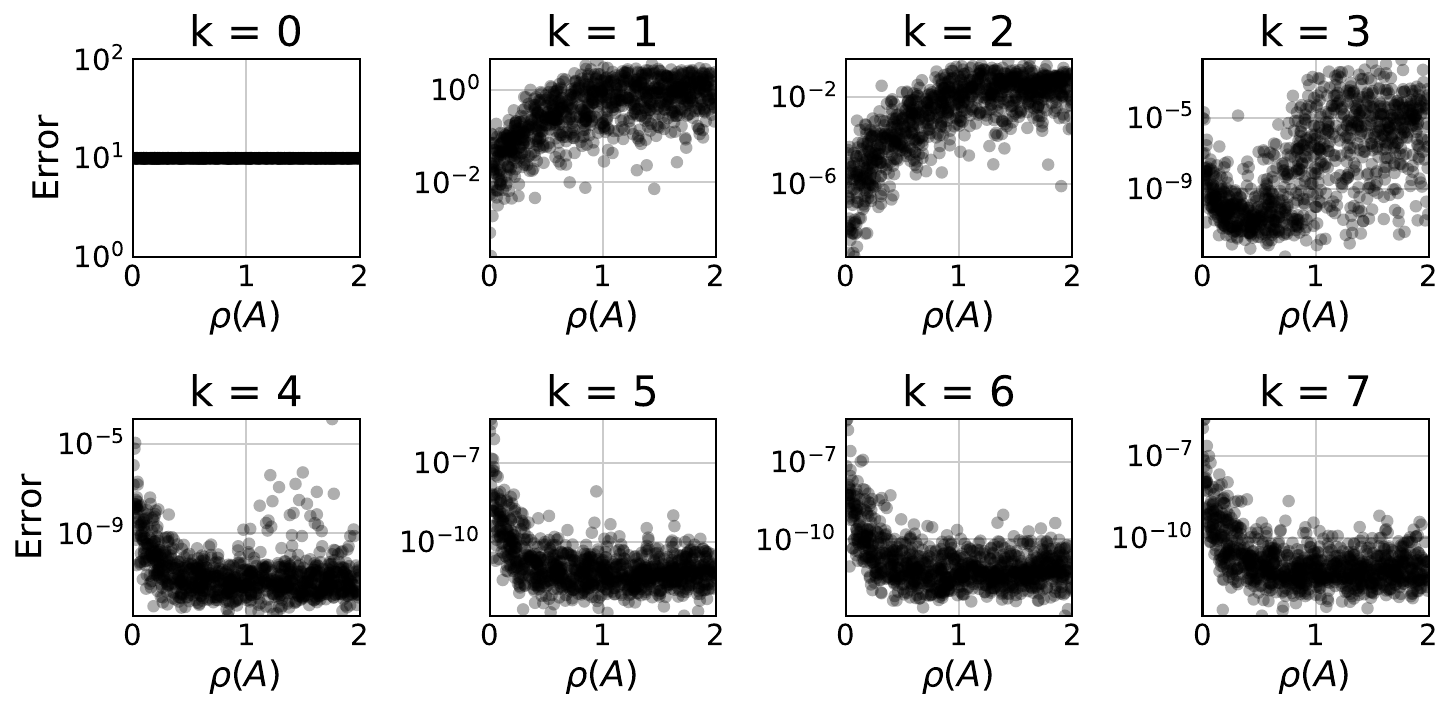}}\\ \ \\%
    \subfigure[Ratio of relative errors using AMPI/API (ON)]{\includegraphics[width=0.99\linewidth]{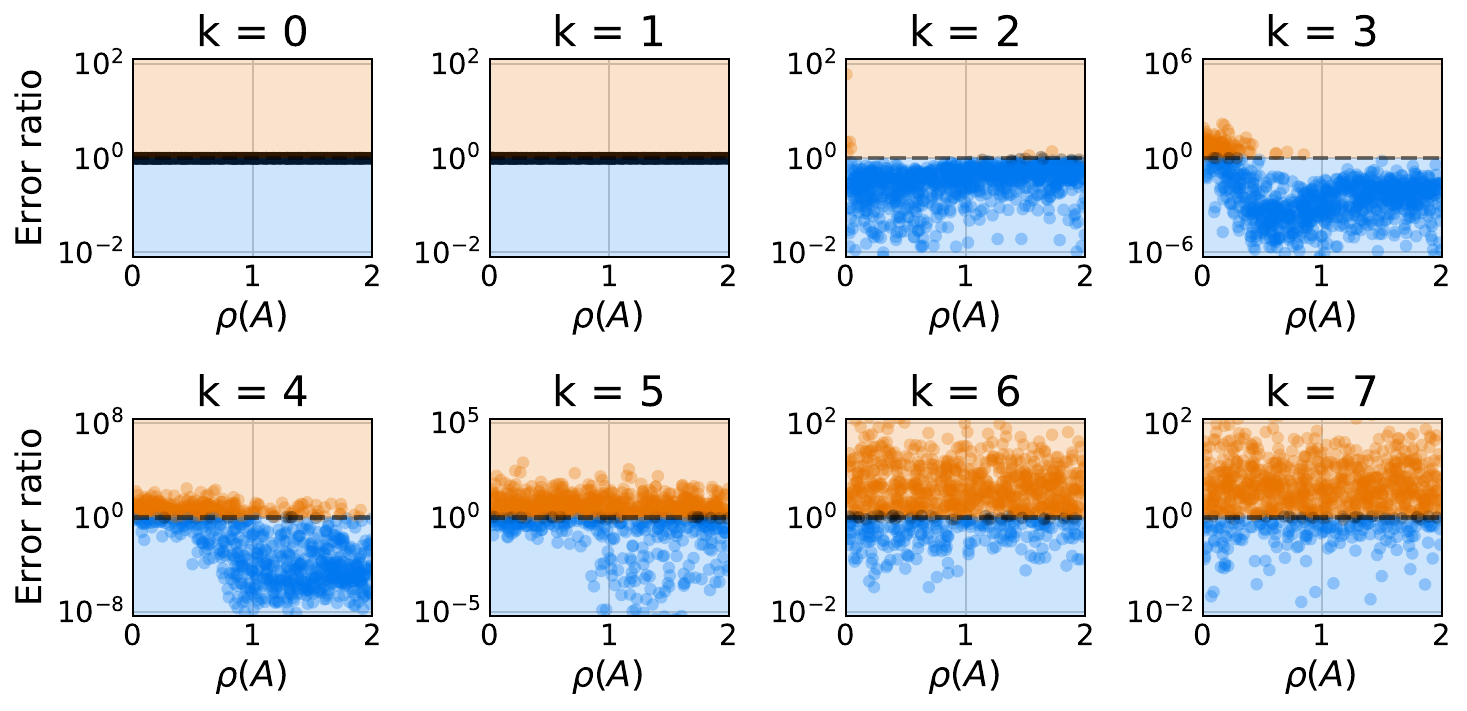}}
  }
\end{figure}

\clearpage
\section{Conclusions and future work}

Empirically, we found that regardless of the stabilizing initial policy chosen, convergence to the optimum always occurred when using the exact midpoint method. Likewise, we also found that approximate midpoint and standard PI converge to the same approximately optimal policy, and hence value matrix $P$, after enough iterations when evaluated on the same fixed off-policy rollout data $\cD$.
We conjecture that such robust, finite-data convergence properties can be proven rigorously, which we leave to future work.

This algorithm is perhaps most useful in the regime of practical problems in the online setting where it is relatively expensive to collect data and relatively cheap to perform the computations required to execute the updates. In such scenarios, the goal is to converge in as few iterations as possible, and MPI shows a clear advantage.
Both the exact and approximate midpoint PI incur a computation cost \emph{double} that of their standard PI counterparts. Theoretically, the faster \emph{cubic} convergence rate of MPI over the \emph{quadratic} convergence rate of PI should dominate this order constant (2$\times$) cost with sufficiently many iterations. However, unfortunately, due to finite machine precision, the total number of useful iterations that increase the precision of the optimal policy is limited, and the per-iteration cost largely counteracts the faster over-iteration convergence of MPI. This phenomenon becomes even more apparent in the model-free case where the ``noise floor'' is even higher. However, this disadvantage may be reduced by employing iterative Lyapunov equation solvers in Algorithm \ref{algo:exact_mpi} or iterative (recursive) least-squares solvers in Algorithm \ref{algo:approximate_mpi} and warm-starting the midpoint equation with the Newton solution.
Furthermore, the benefit of the faster convergence of the midpoint PI may become more important in extensions to nonlinear systems, where the order constants in Propositions \ref{prop:exact_convergence} and \ref{prop:approx_convergence} are smaller.

The current methodology is certainty-equivalent in the sense that we treat the estimated value functions as correct. Future work will explore ways to estimate and account for uncertainty in the value function estimate explicitly to minimize regret risk in the initial transient stage of learning when the amount of information is low and uncertainty is high.

\bibliography{bibliography.bib}
\end{document}